\DeclareMathOperator*{\argmin}{arg\,min}
\newcommand{\R}{\mathbb R}
\newcommand{\N}{\mathbb N}
\newcommand{\bx}{{\bm x}}
\newcommand{\yd}{{y^{\delta}}}
\newcommand{\Q}{{\mathscr Q}}
\newcommand{\Csp}{{\mathscr C}}
\newcommand{\Qad}{{\mathscr Q_\mathsf{ad}}}
\newcommand{\qad}{{Q_{\mathsf{ad},h}}}
\newcommand{\qadK}{{Q_{\mathsf{ad},h}^K}}
\newcommand{\qadrK}{{Q_{\mathsf{ad},r}^K}}
\newcommand{\qadriK}{{Q^{K, \iteridx}_{\mathsf{ad},r}}}
\newcommand{\Sol}{\mathcal S}
\newcommand{\C}{{\mathcal C}}
\newcommand{\F}{{\mathcal F}}
\DeclareMathOperator*{\essinf}{ess\,inf}
\newcommand{\ulin}{\tilde{u}}
\newcommand{\plin}{\tilde{p}}
\newcommand{\iteridx}{{(i)}}
\newcommand{\errest}{\mathcal{R}^\iteridx_{J_r}}
\newcommand{\Qridx}{(Q^{\iteridx}_r)^K}
\newcommand{\qtrial}{q^\iteridx_{\mbox{\tiny{trial}},r}}
\newcommand{\epsPOD}{\epsilon_\text{POD}}
\definecolor{DarkRed}{rgb}{0.5 0 0}
\definecolor{DarkBlue}{rgb}{0 0 0.5}
\definecolor{DarkGreen}{rgb}{0 0.5 0}
\definecolor{DarkOrange}{rgb}{0.83 0.33 0}
\definecolor{DarkMagenta}{rgb}{0.83 0 0.67}
\newtheoremstyle{Stil}{}{}{\itshape}{}{\bfseries}{.\\}{1ex}{}
\newtheoremstyle{Stil2}{}{}{\rmfamily}{}{\bfseries}{.}{1ex}{}
\theoremstyle{Stil}
\newtheorem{theorem}{Theorem}[section]
\newtheorem{proposition}[theorem]{Proposition}
\newtheorem{lemma}[theorem]{Lemma}
\theoremstyle{plain}
\newtheorem{remark}[theorem]{Remark}
\newtheorem{assumption}[theorem]{Assumption}
\theoremstyle{Stil2}
\newtheorem{example}[theorem]{Example}
\crefname{assumption}{\textup{Assumption}}{\textup{Assumptions}}
\crefname{lemma}{\textup{Lemma}}{\textup{Lemmas}}
\crefname{theorem}{\textup{Theorem}}{\textup{Theorems}}
\crefname{remark}{\textup{Remark}}{\textup{Remarks}}
\crefname{example}{\textup{Example}}{\textup{Examples}}
\crefname{corollary}{\textup{Corollary}}{\textup{Corollaries}}
\crefname{subsection}{\textup{Section}}{\textup{Subsections}}
\crefname{section}{\textup{Section}}{\textup{Sections}}
\crefname{figure}{\textup{Figure}}{\textup{Figures}}
\crefname{table}{\textup{Table}}{\textup{Tables}}
\begin{document}

\title[Article Title]{Adaptive Reduced Basis Trust Region Methods for Parabolic Inverse Problems}


\author[1]{\fnm{Michael} \sur{Kartmann}}\email{michael.kartmann@uni-konstanz.de}

\author[2]{\fnm{Benedikt} \sur{Klein}}\email{benedikt.klein@uni-muenster.de}

\author[2]{\fnm{Mario} \sur{Ohlberger}}\email{mario.ohlberger@uni-muenster.de}%

\author[3]{\fnm{Thomas} \sur{Schuster}}\email{thomas.schuster@num.uni-sb.de}%

\author[1]{\fnm{Stefan} \sur{Volkwein}}\email{stefan.volkwein@uni-konstanz.de}

\affil[1]{\orgdiv{Department of Mathematics and Statistics}, \orgname{University of Konstanz}, \orgaddress{\street{Universit\"atsstr. 10}, \city{Konstanz}, \postcode{78464},  \country{Germany}}}

\affil[2]{\orgdiv{Applied Mathematics: Institute for Analysis and Numerics}, \orgname{University of M\"unster}, \orgaddress{\street{Einsteinstr. 62}, \city{M\"unster}, \postcode{48149},  \country{Germany}}}

\affil[3]{\orgdiv{Department of Mathematics}, \orgname{Saarland University}, \orgaddress{\street{Campus}, \city{Saarbrücken}, \postcode{66123},  \country{Germany}}}


\date{\today}

\abstract{
   We consider nonlinear inverse problems arising in the context of parameter identification for parabolic partial differential equations (PDEs). For stable reconstructions, {regularization methods such as the iteratively regularized Gauss–Newton method} (IRGNM) are commonly used, but their application is computationally demanding due to the high-dimensional nature of PDE discretizations. To address this bottleneck, we propose a reduced-order modeling approach that accelerates both the state and adjoint evaluations required for derivative-based optimization. Our method builds on the recent contribution [Kartmann et al. Adaptive reduced basis trust region methods for parameter identification problems.
   Comput. Sci. Eng. 1, 3 (2024)] for elliptic forward operators and constructs the reduced forward operator adaptively in an online fashion, combining both parameter and state space reduction. To ensure reliability, we embed the IRGNM iteration within an adaptive, error-aware trust-region framework that certifies the accuracy of the reduced-order approximations. We demonstrate the effectiveness of the proposed approach through numerical results for both time-dependent and time-independent parameter identification problems in dynamic reaction-diffusion systems. The implementation is made available for reproducibility and further use.}
\date{\today}

\keywords{parameter identification, model reduction, inverse problems, parabolic PDEs, Gauss-Newton methods}


\pacs[MSC Classification]{35R30, 35K90, 65M32, 35K57}

\maketitle

\section{Introduction}
\label{sec:introduction}
\noindent
An important type of dynamic inverse problems (DIPs) are parameter identification problems for time-dependent and spatially distributed parameter fields in parabolic partial differential equations (PDEs) from noise-contaminated data, see, e.g., \cite{KaltenbacherMain, diss_kirchner, kaltenbacher2021parameter, kaltenbacher2021time}. 
Such problems are a challenging task in many scientific and applied disciplines, such as structural engineering. 
A prominent application arises in Structural Health Monitoring (SHM)~\cite{binder2015defect, klein2021sequential, seydel2017linearization, seydel2017identifying, lechleiter2017identifying}, where the goal is to detect structural defects by reconstructing {dynamic loads or the stored energy function from time-dependent sensor data at the structure's surface. 
Such inverse problems (cf.~\cite{isakov2017inverse, Kavian+2003+125+162}), referred to as \textit{dynamic parameter identification problems}, are inherently {under-determined} and ill-posed in the sense that the parameters to be reconstructed do not depend continuously on the measurements~\cite{BakKok04, HaNeSc95, KaltenbacherNeubauerScherzer+2008, Kirs96,burger2024ill}. To obtain meaningful and stable solutions, regularization techniques are indispensable and have been the subject of extensive research over the past decades~\cite{TikArs77, Groe84, Kirs96}.

Numerically solving these inverse problems typically requires a large number of evaluations of the underlying PDE (forward problem), as well as potentially other equations (e.g., adjoint problems) to be solved. In this work, we focus on the iteratively regularized Gauß--Newton method (IRGNM)~\cite{MR1185952, MR1872920, MR2337589, SBH12, KP18, KKV14, KaltenbacherMain} and aim to develop a variant with improved computational efficiency by incorporating (local) reduced basis (RB) models \cite{Haasdonk2017Chapter2R}, which are well-suited for the method's multi-query nature. To enable efficient model updates and error control, this approach is embedded into an error-aware trust-region framework~\cite{YueMeerbergen2013} that has recently been advanced both theoretically and numerically~\cite{BBMV24, Keil2021nonco-54293, keil2024relaxed}. While several strategies have been proposed to enhance the efficiency of the IRGNM~\cite{GhattasWillcox2021, 2_69305, Wald2018, RBL}, our approach is particularly suited to problems involving parameter fields with high spatial resolution. In the following, we present a detailed overview of the techniques employed in this framework.

{
\textbf{Dynamic inverse problems.}
The computation of parameters in (systems of) parabolic or hyperbolic PDEs represents an important class of dynamic inverse problems. Such problems usually are inherently nonlinear (even if the PDE is linear), highly under-determined and (very often) severely ill-posed. Regularization methods demand for many evaluations of the underlying PDE, as well as of the Fr\'{e}chet derivative of the forward operator and its adjoint. Moreover, the (weak) solutions of such PDEs live in appropriate Sobolev-Bochner spaces and show different regularity in time and space. This is why standard analytical frameworks are not appropriate. In \cite{burger2024ill} the authors developed first concepts taking these phenomena into account. They distinguish between temporally (pointwise) and uniformly ill-posedness of dynamic inverse problems as well as between temporal (pointwise) and uniform regularization methods. Here, the pointwise view comes from considering the inverse problem for a fixed time instance as in variational tracking. These are indeed different concepts for ill-posedness and regularization, which is necessary for appropriate solution concepts for dynamic inverse problems, since time is not seen merely as an additional dimension of the problem. A collection of applications for dynamic inverse problems is presented in \cite{kaltenbacher2021time}.
}

\textbf{Model order reduction for parameter identification.}  
Model Order Reduction (MOR) techniques have gained significant attention in recent years as efficient tools for solving parameterized partial differential equations (pPDEs)~\cite{bennerohlberger,MR3701994}. These methods aim to reduce computational complexity by replacing high-dimensional full-order models (FOMs) with low-dimensional surrogate models. A substantial body of literature discussing the application of MOR across various problem domains exists~\cite{KTV13, GK2011, NRMQ2013, MR4628189, MR4902803}.
A particularly prominent class of MOR techniques involves approximating the solution manifold of pPDEs using low-dimensional linear subspaces spanned by a reduced basis. In this context, we refer specifically to the POD-Greedy method~\cite{Haasdonk2008, Haasdonk13} and the Hierarchical approximate POD (HaPOD)~\cite{himpe2018hierarchical}, both of which identify low-dimensional subspaces that capture the dominant behavior from a (potentially large) set of solution snapshots. However, if the parameter space gets high-dimensional, the RB reduction becomes infeasible, due to the high computational effort in constructing the reduced-order model (ROM). This motivates a simultaneous reduction of both the state and parameter spaces, which can be naturally achieved within the reduced basis framework and integrated seamlessly~\cite{kartmann_adaptive_2023}.
 We also refer to \cite{Liebermanetal2016,HimpeOhlberger2014,HimpeOhlberger2015} for alternative approaches of combined state and parameter space reduction.

\textbf{Error-aware trust region methods.}  
To address ill-posed inverse problems using the IRGNM, they are reformulated as optimization tasks that aim to identify a parameter minimizing a regularized discrepancy functional with respect to the observed data. Trust-Region (TR) methods form a class of optimization algorithms that iteratively solve subproblems defined by locally accurate surrogate models. Rather than constructing global reduced basis spaces for the parameter and state, TR approaches concentrate optimization efforts within a localized region, thereby avoiding the burden of building global surrogates.
TR optimization techniques have been effectively combined with model order reduction to leverage locally accurate surrogate models \cite{AFS00,BC08,AB13,AGH18}. If the trust region metric is defined by an error estimator, the trust region method is called error-aware. This integration ensures global convergence while significantly reducing computational costs~\cite{YueMeerbergen2013, RoggTV17}. Further theoretical developments and numerical analyses of such methods are presented in~\cite{Keil2021nonco-54293, BBMV24,keil2024relaxed,QGVW17,UEHH25}.

\textbf{Main results.}
This work extends the approach introduced in \cite{kartmann_adaptive_2023} to inverse problems governed by parabolic PDEs with time-dependent, spatially distributed parameter fields. We demonstrate the potential of combined parameter and state space reduction using a reduced basis approach within an error-aware trust-region optimization framework, enabling an efficient and certified numerical procedure for solving such inverse problems.

A central contribution of this work is the integration of truncated Proper Orthogonal Decomposition (POD) into the enrichment process of the reduced basis spaces. This method guarantees that the reduced bases are extended only by the minimal number of vectors necessary to meet a specified error tolerance, thereby preventing the formation of excessively large reduced spaces that could hinder computational efficiency. Consequently, the approach involves the use of inexact gradients, which require additional safeguards within the optimization algorithm to ensure convergence and avoid stagnation.

The proposed algorithm is implemented in a modular and extensible software framework~\cite{source_code}, built on top the model order reduction library \texttt{pyMOR} \cite{doi:10.1137/15M1026614}. Numerical experiments demonstrate that the approach achieves substantial reductions in computational time for reconstructing time-dependent and time-independent reaction and diffusion fields.

\textbf{Organization of the article.} In Section~\ref{sec:param_ident_prob}, the IRGNM is introduced in an abstract formulation for a generic parameter identification problem and its application to problems defined by parabolic PDEs is discussed. Section~\ref{sec:num_models} outlines the discretization of the governing equations and, based on this, a reformulation into a setting with reduced parameter and state spaces. In Section~\ref{sec:TR-IRGNM}, this reduced framework is combined with the IRGNM into an error-aware trust-region algorithm, employing adaptively enriched reduced basis spaces generated from previously computed snapshots via POD. Finally, numerical experiments in Section~\ref{sec:num_exper} demonstrate the capabilities and limitations of this algorithm.

%

\section{The parabolic parameter identification problem}
\label{sec:param_ident_prob}
In this section, we introduce in Subsection~\ref{subsec:IRGNM} the iteratively regularized Gauss--Newton method in an abstract setting, and present the bilinear parabolic parameter identification in Subsections~\ref{ssec:parabolic_par_id} and \ref{ssec:realization_IRGNM}.
\subsection{The iteratively-regularized Gauss-Newton method (IRGNM)}
\label{subsec:IRGNM}
Let $\Csp$ and $\Q$ be real Hilbert spaces and $\Qad\subset \Q$ be a non-empty, closed, and convex set.
We consider the forward operator $\F:\Qad\to\Csp$, which is assumed to be twice continuously Frechet differentiable. We are interested in the inverse problem of identifying a parameter $q^\mathsf e\in\Qad$ such that for given exact data $y^\mathsf e\in \text{range}(\F)$ the following equation holds:
\begin{align}
	\label{eq: IP}
	\F(q^\mathsf e)=y^\mathsf e\quad\text{in }\Csp.
\end{align}
Throughout our work, we assume that the pair $(y^\mathsf e, q^\mathsf e)$ is unique and that instead of the exact data $y^\mathsf e$ only a noisy measurement $\yd\in \Csp$ is available with
\begin{align*}
	{\|y^\mathsf e-\yd\|}_\Csp\leq \delta,
\end{align*}
where the noise level $\delta>0$ is assumed to be known.  This poses a challenge in reconstructing $q^\mathsf e$, since we assume \eqref{eq: IP} to be ill-posed in the sense that $\F$ is not continuously invertible. Therefore direct inversion of the noise-contaminated data $\yd$ would not lead to a meaningful solution, but instead, regularization methods need to be applied. In this manner, we are interested in minimizing the \emph{discrepancy}
\begin{equation}
	\label{eq:minJ}
	\tag{$\mathbf{IP}$}
	\min J(q)\coloneqq \frac{1}{2} \,{\|\F(q)-\yd\|}_\Csp^2 \quad \text{subject to (s.t.) }\quad q\in\Qad,
\end{equation}
by the \emph{iteratively regularized Gauss-Newton method} (IRGNM) \cite{KKV14}. Since in general ${\yd\notin \mbox{range}(\F)}$, \eqref{eq:minJ} does not need to possess a solution, but it is possible to approximate a reconstruction based on early-stopping as follows.  Let $q^\iteridx\in \Qad$ be an iterate sufficiently close to the solution $q^\mathsf e$ and let $i\in \N$ denote the iteration counter.
The IRGNM update scheme results from linearizing $\F$ at $q^\iteridx$ and minimizing the Tikhonov functional of the linearization
\begin{equation}
	\label{eq:IRGNMscheme_minimize}
	\tag{$\widetilde{\textbf{IP}}_\alpha$}
	d^\iteridx \coloneqq \argmin\tilde{J}(d; q^\iteridx, \alpha^\iteridx)\quad \text{s.t.} \quad q^\iteridx + d \in\Qad,
\end{equation}
where the linearized objective is given as
\begin{equation*}
    \tilde{J}(d; q^\iteridx, \alpha^\iteridx) \coloneqq  \frac{1}{2}\,{\|\F(q^\iteridx)+\F'(q^\iteridx)d-\yd\|}_\Csp^2 + \frac{\alpha^\iteridx}{2}\,{\|q^\iteridx + d -q_\circ \|}_\Q^2,
\end{equation*}
with $q_\circ\in\Q$, the center of regularization, and $\alpha^\iteridx >0$, a Tikhonov regularization parameter. The regularization parameter $\alpha^\iteridx$ is chosen a posteriori. We accept the iterate $q^{(i+1)}\coloneqq q^\iteridx + d^\iteridx$ if $\alpha^\iteridx$ satisfies
\begin{equation}
	\label{eq: choice of alpha}
	\theta J(q^\iteridx)\leq 2\tilde{J}(d^\iteridx; q^\iteridx, 0)  =  {\|\F'(q^\iteridx)d^\iteridx+\F(q^\iteridx)-\yd\|}_\Csp^2 \leq \Theta J(q^\iteridx),
\end{equation}
where $0<\theta<\Theta<2$ holds. As stopping criteria, we use the so-called \emph{discrepancy principle}: the iteration is stopped after $i_*(\delta,\yd)$ steps provided
\begin{equation}\label{eq: discrepancy principle}
	J(q^{(i_*(\delta,\yd))})\leq \frac{1}{2}(\tau \delta)^2 \leq  J(q^\iteridx), \quad\ i=0,...,i_*(\delta,\yd).
\end{equation}
The parameter $\tau>1$ reflects that we cannot expect the discrepancy to be lower than the noise in the given data. In \cite{KKV14} a local convergence and regularization result was proven under additional assumptions on $\F$. To be precise, it was shown that the early-stopping index $i_*(\delta,\yd)$ is finite and one has $q^{(i_*(\delta,\yd)))}\to q^{\mathsf e}$ as $\delta \searrow 0$. The procedure is summarized in \Cref{algo:IRGNM}.
\begin{algorithm2e}
\DontPrintSemicolon
\caption{IRGNM}\label{algo:IRGNM}
	\KwData{Noise level $\delta$, discrepancy parameter $\tau>1$,  initial guess $q^{(0)}$,  initial regularization $\alpha^{(0)}>0$, regularization center $q_\circ$.}
	Initialize $i=0$.\;
	\While{$\|\F(q^\iteridx)-\yd\|_\Csp > \tau \delta$}{
		Solve subproblem \eqref{eq:IRGNMscheme_minimize} for $d^\iteridx$.\;
		\While{\eqref{eq: choice of alpha} is not fulfilled}{
			Increase (decrease) $\alpha^\iteridx$ if the first (second) part of \eqref{eq: choice of alpha} is not valid.\;
			Solve subproblem \eqref{eq:IRGNMscheme_minimize} for $d^\iteridx$.\;
		}
		Set $q^{(i+1)}=q^\iteridx + d^\iteridx$, $i=i+1$.\;
	}
\end{algorithm2e}
%
\subsection{The parabolic parameter identification problem}\label{ssec:parabolic_par_id}
%
Let $V$ and $H$ be real Hilbert spaces, with $V$ densely embedded into $H$. Let $T > 0$ and define $\Q \subset L^2(0,T; Q)$, $\Csp\coloneqq L^2(0,T;C)$ the parameter and output space, respectively, where $C$ and $Q$ are real Hilbert spaces. For $q \in \Qad \subset \Q$, let $u=\Sol(q) \in W(0,T)\coloneqq L^2(0,T; V) \cap H^1(0,T; V')$, be the \emph{primal state}, defined as the solution to the parabolic PDE
\begin{align}
    \label{eq:weak_problem}
    \langle \partial_t u(t)+ A(q(t))u(t),v \rangle_{V',V} =  \langle l(t),v \rangle_{V',V}   , \quad u(0) = 0\text{ in }H
\end{align}
for all $v \in V$ and f.a.a. (for almost all) $t\in (0,T)$. The solution operator to \eqref{eq:weak_problem} $\Sol:\Qad\to W(0,T)$ is well-defined for $l(t) \in V'$ f.a.a. $t\in (0,T)$ under the following assumption.

\begin{assumption}
    \label{ass:bilinear_form}
    The operator $A(q(t)) \in \mathcal{L}(V, V')$ is coercive and continuous for all $q\in\Qad$, f.a.a. $t\in (0,T)$, i.e., there exists a positive $a\in \R$ such that for all $q\in \Qad$ there exists $a_{q(t)}\geq a$
    \begin{equation}
        \langle A(q(t))v,v \rangle_{V',V}\geq a_{q(t)}\|v\|^2_V \quad \mbox{ for } t\in (0,T)
    \end{equation}
    for all $v \in V$. We further assume the map $q(t) \mapsto A(q(t))$ to be affine.
\end{assumption}
The forward operator in \eqref{eq: IP} takes the form $\F:\Qad\to\Csp\quad\text{with }\F=\mathcal C\circ\mathcal S$, where $\Sol$ specified above and $\mathcal{C} \in \mathcal L(L^2(0,T; V),\Csp)$ is a linear and bounded observation operator. In the following, we assume for simplicity that $\mathcal C $ is independent of $t$. Hence, we can understand the mapping $\mathcal{C}$ also as an mapping $\C\in \mathcal L(V,C)$ via $\C u(t)=(\C u)(t)$ for all $u\in L^2(0,T;V)$ and $t\in (0,T)$.
\begin{example}
    \label{ex:default_example}
    We consider the two examples, introduced in \cite{kartmann_adaptive_2023}, extending them to the parabolic setting. Let $\Omega \subset \R^d$ be a convex domain and $V\coloneqq H^1_0(\Omega)$, $H\coloneqq L^2(\Omega)$, $C\coloneqq H$. The operator $\C$ is chosen to be the compact, injective embedding from $W(0,T)$ into $L^2(0,T;H)$.
    \begin{enumerate}[label=(\roman*.)]
        \item \textit{Reconstruction of a reaction field:} Let $q_\text{a} \in L^\infty(\Omega)$ be such that $\essinf_{\Omega }q_\text{a}>0$. We choose $Q = L^2(\Omega)$ and $\Q = L^2(0,T; L^2(\Omega))$. The set of admissible parameter functions is given as  
        \begin{equation}
            \label{eq:Q_ad}
            \Q_\text{ad}\coloneqq\lbrace q \in \Q\,|\,q_\text{a} \leq q \text{ a.e. in } (0,T) \times \Omega \rbrace.
        \end{equation}
        For $q \in \Q_\text{ad}$ and $f\in L^2(0,T; H)$ consider 
        \begin{align}
            \label{eq:reaction_problem}
            \begin{aligned}
                \partial_t u(t,\bx)-\Delta u(t, \bx)+q(t,\bx) u(t,\bx)&=f(t,\bx)&&\quad \text{for }(t,\bx)\in (0,T)\times\Omega, \\
                u(t,\bx)&=0&&\quad\text{for }(t,\bx) \in (0,T)\times\partial\Omega,\\
                u(0,\bx)&=0&&\quad\text{for }\bx\in \Omega.
            \end{aligned}
        \end{align}
        The weak formulation of \eqref{eq:reaction_problem} can be expressed
        in the form \eqref{eq:weak_problem} by setting f.a.a. $t \in (0,T)$
        \begin{equation*}
            \langle A(q(t))v,w \rangle_{V',V}\coloneqq\int_\Omega \nabla v \cdot \nabla w + q(t, \bx) v w\,\text{d}\bx\quad\text{for } v,w \in V
        \end{equation*}    
        and 
        \begin{equation*}
            \langle l(t),v \rangle_{V',V}\coloneqq \int_\Omega f(t)v\,\text{d}\bx\quad \text{for } v \in V.
        \end{equation*}
        \item \textit{Reconstruction of a diffusion field:} Let $\Q_\text{ad}$ be given as in \eqref{eq:Q_ad}. We define the parameter space by $Q = H^2(\Omega)$ and ${\Q\coloneqq L^2(0,T; H^2(\Omega)) \hookrightarrow L^2(0,T; C(\bar{\Omega}))}$. For given $q \in \Q_\text{ad}$ and $f\in L^2(0,T; H)$ consider 
        \begin{align}
            \label{eq:diffusion_problem}
            \begin{aligned}
                \partial_t u(t, \bx)- \nabla(q(t,\bx)\cdot\nabla u(t,\bx)) &=f(t,\bx)&&\text{for }(t,\bx)\in (0,T) \times \Omega,\\
                u(t, \bx)&=0&&\text{for }(t,\bx)\in(0,T)\times \partial\Omega,\\
                u(0,\bx)&=0&&\text{for }\bx\in \Omega.
            \end{aligned}
        \end{align}
        For 
        \begin{equation*}
            \langle A(q(t))v,w \rangle_{V',V}\coloneqq\int_\Omega q(t,\bx) \nabla v \cdot \nabla w \,\text{d}\bx\quad\text{for }v,w\in V
        \end{equation*}   
        and 
        \begin{equation*}
            \langle l(t),v \rangle_{V',V}\coloneqq \int_\Omega f(t)v\,\text{d}\bx\quad\text{for } v \in V
        \end{equation*}
        the weak formulation of \eqref{eq:diffusion_problem} is given by \eqref{eq:weak_problem}.
    \end{enumerate}
\end{example}
\begin{remark}
    \label{rem:example_time_indep}
    The Examples \ref{ex:default_example} include the situation where the parameter fields are considered to be constant in time, i.e., $q(t,\bx) = \tilde{q}(\bx)$ for all $t \in (0,T)$. In this case, the parameter fields are assumed to be elements of $L^2(\Omega)$ and $H^2(\Omega)$, respectively, with the set of admissible parameters given as
    \begin{equation*}
      {\Q}_\text{ad} := \left\lbrace q \in \Q \mid q_\text{a} \leq q \text{ a.e. in } \Omega \right\rbrace.
    \end{equation*}
\end{remark}
%

%
\subsection{Realization of the IRGNM}
\label{ssec:realization_IRGNM}
Let $A(q)'\in \mathcal{L}(V,V')$ denote the dual operator of $A(q)$ satisfying $\langle A(q)'p,u\rangle_{V',V}= \langle A(q)u,p\rangle_{V',V}$ for $u,p\in V$ and $q\in Q$. 
The adjoint equation w.r.t. the objective in \eqref{eq:minJ} is given for $q\in \Qad$ as
\begin{align}\label{eq:adjoint_eq}
     \langle -\partial_t p(t)+A(q(t))'p(t), v \rangle_{V', V} 
    = - \langle \C u(t)-y^\delta(t), \C v \rangle_C, \quad p(T) = 0,
\end{align}
for all $v \in V$ and f.a.a. $t\in(0,T)$. Under Assumption \ref{ass:bilinear_form}, a unique solution $p \in W(0,T)$ exists, called the adjoint state. If $u$ and $p$ are the state and the adjoint state respectively corresponding to $q$, the gradient at $q$ in the direction $e \in \Q$ is given as
\begin{equation}
    \langle \nabla J(q), e \rangle_\Q =  
    \int_{0}^T\langle A(e(t))u(t), p(t) \rangle_{V',V}\,\text{d}t.
\end{equation}
Let ${u^\iteridx = \mathcal{S}(q^\iteridx) \in W(0,T)}$ be the state corresponding to the current iterate ${q^\iteridx \in \Qad}$. The iterates are updated according to
\begin{equation}
    q^{(i+1)} = q^\iteridx + d^\iteridx \in \Qad,
\end{equation}
where $d^\iteridx$ solves the strictly convex optimization problem \eqref{eq:IRGNMscheme_minimize} for some regularization parameter $\alpha^\iteridx > 0$. The update $d^\iteridx$ is optimal if and only if, cf.~\cite{nocedal2006numerical},
\begin{equation*}
    d^\iteridx = P_{\Qad} \left[q^\iteridx + d^\iteridx - \nabla_d \tilde{J}(d^\iteridx; q^\iteridx, \alpha^\iteridx) \right]- q^\iteridx ,
\end{equation*}
where $P_{\Qad}:\Q \rightarrow\Qad$ denotes the projection onto the non-empty, closed convex set $\Qad \subset \Q$, defined by
\begin{equation}
    \label{eq:projection_operator}
    P_{\Qad}(x)\coloneqq\argmin \|x - y\|_\Q \text{ s.t. } y \in \Qad.
\end{equation}
The gradient of the linearized objective in direction $e \in \Q$, for $u = \mathcal{S}(q)$, is given by
\begin{equation*}
    \label{eq:lin_grad}
    \langle \nabla_d \tilde{J}(d; q, \alpha), e \rangle_{\Q} = \int_0^T \langle A(e(t))u(t), \plin(t) \rangle_{V', V} + \langle \alpha (q(t) + d(t) - q_\circ(t)), e(t) \rangle_{\Q} \,\text{d}t,
\end{equation*}
where the linearized adjoint state $\plin \in W(0,T)$ is determined by solving the system
\begin{align*}
	0 &= \langle \partial_t \ulin(t) + A(q(t))\ulin(t), v \rangle_{V', V} + \langle A(d(t))u(t), v \rangle_{V', V}, \\
	0 &= \langle -\partial_t \plin(t) + A(q(t))'\plin(t), v \rangle_{V', V} + \langle \C(u(t) + \ulin(t) - y^\delta(t)), \C v \rangle_C,
\end{align*}
for all $v \in V$, $q \in \Q$, and f.a.a. $t \in (0, T)$, with initial and terminal conditions $\ulin(0) = 0$ and $\plin(T) = 0$. We refer to $\ulin \in W(0,T)$ as the linearized state.
\section{Numerical models}
\label{sec:num_models}
In this section, we introduce the finite element model corresponding to the system derived in Section~\ref{sec:param_ident_prob}, and outline the IRGNM procedure applied to it. Subsequently, in Subsection~\ref{sec:modelreduction}, we define the combined parameter- and state-reduced basis model associated with this full-order model (FOM).
\subsection{Finite Element model}
\label{ssec:discretization}

Let defined finite-dimensional linear spaces $V_h = \mathrm{span}(\varphi_1, \dots, \varphi_{N_V}) \subset V$, ${Q_h = \mathrm{span}(q_1, \dots, q_{N_Q}) \subset Q}$, and $C_h = \mathrm{span}(c_1, \dots, c_{N_C}) \subset C$ with dimensions $N_V$, $N_Q$, and $N_C\in \N$, respectively. We denote the corresponding space-discrete objects as\newpage
\begin{align*}
    \bm{M}_Q &\coloneqq \left(\langle q_i,q_j \rangle_{Q}\right)_{i,j \in \{1, \dots, N_Q\}},&  \bm{M}_V &\coloneqq \left(\langle \varphi_i,\varphi_j \rangle_{V}\right)_{i,j \in \{1, \dots, N_V\}},\\
    \bm{M}_H &\coloneqq \left(\langle \varphi_i,\varphi_j \rangle_{H}\right)_{i,j \in \{1, \dots, N_V\}},& \bm{M}_C &\coloneqq \left(\langle c_i,c_j \rangle_{C}\right)_{i,j \in \{1, \dots, N_C\}},\\
    \bm{L}_h(t)&\coloneqq\left(\langle l(t),\varphi_j\rangle_{V',V}\right)_{j\in \{1,\dots,N_V\}}&\text{for}&~t\in(0,T),\\
    \bm{A}_h(q_h)&\coloneqq \left(\langle A(q_h)\varphi_j,\varphi_i \rangle_{V',V}\right)_{i,j \in \{1, \dots, N_V\}}&\text{for}&~q_h\in Q_h. 
\end{align*}
For discretization in time, we use the methods of lines \cite{ern2004theory} on an equidistant temporal grid $\{0 \eqqcolon t^0 < \dots < t^K \coloneqq T\}$ of $[0,T]$, with $K\in \N$ and $\Delta t\coloneqq t^{k}-t^{k-1}$ for all $k \in \{1, \dots K\}$. We denote $\mathbb{K} \coloneqq \{1, \dots, K\}$, and write $v = [v^1, \dots, v^K] \in V_h^K$ to represent a discrete trajectory with values in $V_h$. Here, $V_h^K$ denotes the product space of $V_h$. In the following, we identify $v$ with its DoF-matrix $\bm{v}=[\bm{v}^1,\dots,\bm{v}^K]\in \R^{N_V\times K}$ where $v^k = \sum_{i = 1}^{N_V} \bm{v}^k_{i}\varphi_i$. We equip these spaces with the discrete inner products
\begin{equation}
    \label{eq:disc_bochner_prods}
    \langle v, w \rangle_{V_h}\coloneqq \bm{v}^\top \bm{M}_V \bm{w} \text{ and } \langle v, w \rangle_{V_h^K}\coloneqq \Delta t \sum_{k = 1}^K \langle v^k, w^k \rangle_{V_h}
\end{equation}
respectively. Analogous definitions apply to the spaces $Q_h$ and $C_h$.
For the discretization of the state equation \eqref{eq:weak_problem}, let $q_h \in \qadK \subset Q_h^K$, where the discrete admissible set is defined as
\begin{equation}
    \label{eq:discCons}
    \qadK \coloneqq \bigg\{q\in  Q_h^K  \ \bigg| \sum\limits_{k=1}^{K} \mathbbm{1}_{(t_{k-1},t_k)}q^k\in \Qad\bigg\} \subset Q^K_h.
\end{equation}
The state DoF-matrix $\bm{u}_h \in \mathbb{R}^{N_V \times K}$ for the discrete state $u_h \in V_h^K$ is obtained as the unique solution to the discrete state equation:
\begin{equation}
    \label{eq:FOM_primal_PDE}
    \frac{1}{\Delta t} \bm{M}_H (\bm{u}_h^k - \bm{u}_h^{k-1}) + \bm{A}_h(q_h^k) \bm{u}_h^k = \bm{L}_h^k\quad \text{for } k \in \mathbb{K},
\end{equation}
with initial condition $\bm{u}_h^0 = 0$ and $\bm{L}_h^k\coloneqq \bm{L}_h(t^k)$. We denote the discrete solution operator by $\Sol_h : \qadK \rightarrow V_h^K$, returning for $q_h \in Q_h^K$ the solution $u_h=\Sol_h(q_h)$ to \eqref{eq:FOM_primal_PDE} and define the discrete observation operator $\C_h: V_h \rightarrow C_h$ for $v_h \in V_h$ as $w_h = \C_h v_h$ via
\begin{align*}
    & \bm{w}_h \coloneqq \bm M_C^{-1}
	\left(\langle \C v_h,c_j\rangle_{C}\right)_{j\in \{1,\dots,N_C\}}.
\end{align*} 
The discrete forward operators is given as $\F_h := \C_h \circ \Sol_h : \qadK \rightarrow C_h^K$, in the sense that $\F_h(q_h^k) = \C_hu^k_h$ for all $k \in \mathbb{K}$.
Define analogue the projection $y_h^\delta\in C_h^K$ of $y^\delta$:
\begin{align*}
    & \bm{y}^{\delta,k}_h\coloneqq \bm M_C^{-1}
	\left(\langle y^\delta(t^k),c_j\rangle_{C}\right)_{j\in \{1,\dots,N_C\}}
	\text{ for }k\in \mathbb K.
\end{align*}
To define the discrete objective in \eqref{eq:minJ}, we discretize the bilinear, linear and constant parts of the cost function as follows:
\begin{align*}
     &\bm{C}_h \coloneqq \bm \left(\langle \C_h\varphi_j, \C_h\varphi_i \rangle_{C_h}\right)_{i,j \in \{1, \dots, N_V\}},\ {\bm{Cy}^{\delta,k}_h}\coloneqq \bm \left(\langle \C_h\varphi_j, y_{h}^{\delta,k}\rangle_{C_h}\right)_{1\le j \le N_V} \text{ for }k\in \mathbb K,\\
     &c_1(\bm{y}_h^{\delta}) \coloneqq {\Delta t} \sum_{k = 1}^K\tfrac{1}{2}\big({\bm{y}_h^{\delta,k}}^\top \bm M_C {\bm{y}_h^{\delta,k}} \big).
\end{align*}
Then, with $\Sol_h(q_h)=u_h$ for $q_h\in Q_h^K$, the discrete objective is given by
\begin{align}
    \label{eq:J_h}
    \begin{aligned}
        J_h(q_h) &\coloneqq \frac{\Delta t}{2} \sum_{k = 1}^K \|\mathcal{F}_h(q_h)^k - y_{h}^{\delta,k} \|^2_{C_h},\\
        &\hspace{0.5mm}=c_1(\bm{y}_h^{\delta})+{\Delta t}  \sum_{k = 1}^K  \tfrac{1}{2}\big({\bm{u}_h^{k\top}}\bm C_h \bm{u}_h^k \big)-\big({\bm{u}_h^{k\top}} \bm{Cy}^{\delta,k}_h\big).
    \end{aligned}
\end{align}
The discrete counterpart of the adjoint equation \eqref{eq:adjoint_eq} reads for $p_h\in V_h^K$
\begin{align*}
    \frac{1}{\Delta t}\bm M_H(\bm{p}_h^k - \bm{p}_h^{k+1}) + \bm A_h(q^k_h)^\top\bm{p}^k_h & = -\bm C_h \bm{u}_h^k +\bm{Cy}^{\delta,k}_h \quad \forall k \in \mathbb{K},
\end{align*}
where $\bm{p}_h^{K+1} = 0$. The discrete gradient $\nabla J_h(q_h) \in Q_h^K$ is given component-wise by
\begin{equation*}
    \bm{\nabla J}_h(q_h)^k = \bm M_Q^{-1}\bm B_h(u_h^k)^\top \bm{p}^k_h \quad \forall k \in \mathbb{K}
\end{equation*}
where 
\begin{equation*}
\bm{B}_h(u_h^k)\coloneqq \left( \langle A(q_j) u_h^k, \varphi_i \rangle_{V',V} \right)_{i \in \{1, \dots, N_V\}, j \in \{1, \dots, N_Q\}} \in \mathbb{R}^{N_V \times N_Q}. 
\end{equation*}
The resulting discretized optimization problem corresponding to \eqref{eq:minJ} reads
\begin{equation}
	\label{eq:minJ_FOM}
	\tag{$\mathbf{IP}_{h}$}
	\min J_h(q_h) \quad \text{s.t.}\quad q_h \in \qadK.
\end{equation}
The procedure outlined in Section~\ref{ssec:realization_IRGNM} for solving the minimization problem \eqref{eq:minJ}, can be directly transferred to a discrete formulation. Setting the constant parts as ${{\tilde{\bm{q}}}_{\circ,h}^{k} \coloneqq {\bm{q}_{\circ,h}^{k}} - {\bm{q}_{h}^{k}}}$, $\bm{C}\tilde{\bm{y}}^{\delta,k}_h={\bm{Cy}^{\delta,k}_h}-\bm{C}_h \bm{u}_h^{k}$, and
\begin{align*}
    &c_2(\tilde{\bm{y}}^{\delta,k}_h) \coloneqq c_1(\bm{y}_h^{\delta})+ {\Delta t} \sum_{k = 1}^K\tfrac{1}{2}\big({\bm{u}_h^{k\top}} \bm C_h {\bm{u}_h^{k}} \big)-\big({\bm{u}_h^{k\top}} \bm{Cy}^{\delta,k}_h\big),\\
    &c_3({\tilde{\bm{q}}}_{\circ,h})\coloneqq  {\Delta t} \sum_{k = 1}^K\tfrac{1}{2} \tilde{\bm{q}}_{\circ,h}^{k\top} \bm{M}_Q \tilde{\bm{q}}_{\circ,h}^k,
\end{align*}
the discrete, linearized reduced objective $\widetilde{J}_h$ is defined as
\begin{align*}
    \widetilde{J}_h(d_h; q_h, \alpha)  \coloneqq\, &  c_2(\tilde{\bm{y}}^{\delta,k}_h)+\Delta t \sum_{k=1}^K \left[
    \tfrac{1}{2} \tilde{\bm{u}}_h^{k\top} \bm{C}_h \tilde{\bm{u}}_h^k 
    - \tilde{\bm{u}}_h^{k\top} \bm{C}\tilde{\bm{y}}^{\delta,k}_h
    \right] \\
    & + \alpha c_3(\tilde{\bm{q}}_{\circ,h}) + \alpha \Delta t \sum_{k=1}^K \left[
    \tfrac{1}{2} \bm{d}_h^{k\top} \bm{M}_Q \bm{d}_h^k 
    - \bm{d}_h^{k\top} \bm{M}_Q \tilde{\bm{q}}_{\circ,h}^k  \right].
\end{align*}
The discrete, linearized subproblem \eqref{eq:IRGNMscheme_minimize} at $q^\iteridx_h$ is given as:
\begin{align}
    \label{prob:lin_prob_FOM}
    \tag{$\widetilde{\mathbf{IP}}_{\alpha,h}$}
    d^\iteridx_h\coloneqq\argmin \widetilde{J}_h(d_h; q^\iteridx_h, \alpha^{\iteridx})\quad \text{s.t.}\quad q^\iteridx_h + d_h \in  \qadK,
\end{align}
for $\alpha^{(i)} > 0$. The update direction $d^\iteridx_h$ is accepted and the iterate updated, i.e. ${q_h^{(i+1)} := q_h^\iteridx + d^\iteridx_h}$, if, analogue to \eqref{eq: choice of alpha}, hold
\begin{equation}
	\label{eq:choice_of_alpha_FOM}
	\theta J_h(q_h^\iteridx)\leq 2  \widetilde{J}_h(d^\iteridx_h; q^\iteridx_h, 0)\leq \Theta J_h(q_h^\iteridx),
\end{equation}
for $0<\theta<\Theta<2$. Otherwise, the regularization parameter $\alpha^\iteridx > 0$ is adjusted as described in Section~\ref{sec:introduction}. To provide a numerical procedure for solving \eqref{prob:lin_prob_FOM}, we have to compute the gradient of the linearized objective. Therefore, define the discrete linearized state $\tilde{u}_h \in V_h^K$ as the unique solution of
\begin{align}
	\label{eq:state_prob_FOM}
    \frac{1}{\Delta t}\bm{M}_H(\tilde{\bm{u}}_h^{k} - \tilde{\bm{u}}_h^{k-1}) + \bm{A}_h(q_h^k) \tilde{\bm{u}}_h^k + \bm{B}_h(u_h^k) \bm{d}_h^k = 0 \quad \forall k \in \mathbb{K},
\end{align}
with initial condition $\tilde{\bm{u}}_h^0 = 0$. The discrete linearized adjoint state $\tilde{p}_h \in V_h^K$ satisfies
\begin{align}
	\label{eq:adj_prob_FOM}
    \frac{1}{\Delta t}\bm{M}_H(\tilde{\bm{p}}_h^k - \tilde{\bm{p}}_h^{k+1}) + \bm{A}_h(q_h^k)^\top \tilde{\bm{p}}_h^k 
    = -\bm{C}_h \tilde{\bm{u}}_h^k + \bm{C}\tilde{\bm{y}}_h^{\delta,k} \quad \forall k \in \mathbb{K},
\end{align}
with terminal condition $\tilde{\bm{p}}_h^{K+1} = 0$. The discrete linearized gradient is then given by
\begin{equation*}
    \bm{\nabla_d} \widetilde{\bm{J}}_{h}(d_h; q_h, \alpha)^k = \bm{M}_Q^{-1} \bm{B}_h(u_h^k)^\top \tilde{\bm{p}}_h^k + \alpha (\bm{d}_h^k - \tilde{\bm{q}}_{\circ,h}^k), \quad \forall k \in \mathbb{K}.
\end{equation*}
Using this gradient, the linear problem can be solved numerically, via projected gradient decent, cf.~\cite{azmi2023nonmonotone}. Thus, problem~\eqref{eq:minJ_FOM} can be solved by applying Algorithm~\ref{algo:IRGNM}, where \eqref{prob:lin_prob_FOM} is iteratively solved. In the following, this procedure will be referred to as \emph{FOM-IRGNM}.

\begin{remark}
    \label{rem:linear_problem_time_indep}
    As noted in Remark~\ref{rem:example_time_indep}, the formulation above naturally includes the case of parameter fields that are constant in time by setting $q_h^k \coloneqq\tilde{q}_h \in Q_h$ for all $k \in \mathbb{K}$. In this case, the linearized subproblem is posed on $Q_h$, and the regularization acts on the deviation from a reference parameter ${q}_{\circ,h} \in Q_h$. The set of admissible parameter $\qad \subset Q_h$ is defined analogue to \eqref{eq:discCons}. The update $d_h^\iteridx$ at $q_h^\iteridx \in Q_h$ is computed as
    \begin{equation*}
        d^\iteridx_h \coloneqq \argmin \widetilde{J}_h(d_h; q^\iteridx_h, \alpha^{\iteridx}) \quad \text{s.t.}\quad q^\iteridx_h + d_h \in \qad,
    \end{equation*}
    for 
    \begin{align*}
        \widetilde{J}_h(d_h; q_h, \alpha)&\coloneqq c_2(\tilde{\bm{y}}^{\delta,k}_h)+\Delta t \sum_{k=1}^K \left[
    \tfrac{1}{2} \tilde{\bm{u}}_h^{k\top} \bm{C}_h \tilde{\bm{u}}_h^k 
    - \tilde{\bm{u}}_h^{k\top} \bm{C}\tilde{\bm{y}}^{\delta,k}_h
    \right]  \\
        &\,\quad + \alpha \left[
        \tfrac{1}{2} \bm{d}_h^\top \bm{M}_Q \bm{d}_h - \bm{d}_h^\top \bm{M}_Q \tilde{\bm{q}}_{\circ,h} + \tfrac{1}{2} \tilde{\bm{q}}_{\circ,h}^{\top} \bm{M}_Q \tilde{\bm{q}}_{\circ,h} \right],
    \end{align*}
    where $\tilde{\bm{q}}_{\circ,h} = \bm{q}_{\circ,h} - \bm{q}_h$. The corresponding discrete gradient $\nabla \widetilde{J}_h(d_h; q_h, \alpha) \in Q_h$ reads
    \begin{equation*}
        \bm{\nabla_d} \widetilde{\bm{J}}_{h}(d_h; q_h, \alpha) = \Delta t \sum_{k=1}^K \bm{M}_Q^{-1} \bm{B}_h(u_h^k)^\top \tilde{\bm{p}}_h^k + \alpha (\bm{d}_h - \tilde{\bm{q}}_{\circ,h}).
    \end{equation*}
\end{remark}

\subsection{Reduced basis model}
\label{sec:modelreduction}
%
\noindent
In this section, we introduce the RB-ROM with reduction in both the parameter and state space. In each iteration of the FOM-IRGNM, the linearized local optimization problem \eqref{prob:lin_prob_FOM} must be solved at least once. Hence, applying MOR is a natural choice to speed up the solution process. However, replacing the high-fidelity setup with reduced-order models necessitates an error control mechanism, such as an error-aware trust-region framework \cite{Keil2021nonco-54293, YueMeerbergen2013}, to ensure the accuracy of the reduced solutions.

As noted in~\cite{kartmann_adaptive_2023}, the high dimensionality of the parameter space poses a significant challenge for constructing certified reduced-order models via projection. The core difficulty lies in the structure of the bilinear form $\bm A_h(q_h)$ for $q_h \in Q^K_h$. Due to the linear dependence of $\bm A_h(q_h)$ on $q_h$, it admits an affine decomposition of the form
\begin{equation}
    \label{eq:affine_decomp}
    \bm A_h(q^k_h)= \bm A_{h}(0) + \sum^{N_Q}_{j = 1} \bm{q}^k_{h, j} \bm A_{h, j}
\end{equation}
for $k \in \mathbb{K}$, where $\bm A_{h, j} \coloneqq \bm A_h(q_j) \in \mathbb{R}^{N_V \times N_V}$ for $j = 1, \ldots, N_Q$. The dimension of the parameter space $Q_h$ can be arbitrarily large, which complicates the use of reduced basis methods. Such methods rely usually \textit{on low-dimensional} affine decompositions to enable efficient offline-online splitting. As highlighted in~\cite{kartmann_adaptive_2023}, a large number of affine components render both assembly and projection of $\bm A_h(q^k_h)$ computationally prohibitive, limiting the applicability of efficient offline-online decomposition, unless parameter reduction is applied.

To address these limitations, we extend the methodology of~\cite{kartmann_adaptive_2023}, originally developed for parameter identification problems governed by elliptic PDEs, to the setting of parabolic problems. In this approach, both the state and parameter spaces are replaced with adaptively constructed reduced basis spaces, and the resulting reduced basis models are employed to solve the linearized optimization problem~\eqref{prob:lin_prob_FOM}. By simultaneously reducing the state and parameter spaces, the computationally expensive assembly of high-dimensional system matrices is avoided, as the reduced-order model operates entirely within low-dimensional subspaces. 

For parabolic PDE constraints, we improve the reduced basis enrichment strategy of~\cite{kartmann_adaptive_2023} by incorporating proper orthogonal decomposition (POD), which allows for a trade-off between local approximation accuracy and computational efficiency. However, this enhancement requires additional safeguards and fallback mechanisms within the algorithm to ensure robust optimization performance. Moreover, we explicitly enforce the set of admissible parameters $\qadK$ using proximal optimization techniques, in contrast to~\cite{kartmann_adaptive_2023}, where this constraint was only treated implicitly.

In the following, we describe the implementation of the algorithm from~\cite{kartmann_adaptive_2023}, adapted to parabolic PDE-constrained problems. We begin by reformulating the high-dimensional optimization problem in a reduced setting and introducing a posteriori error estimators for the reduced objective functional. Next, we present a modified version of the algorithm from~\cite{kartmann_adaptive_2023}, referred to as TR-IRGNM, which integrates the adaptively constructed reduced models into an error-aware trust-region framework with additional safeguards. Finally, we introduce the enrichment strategy used for iteratively constructing the reduced basis spaces.

Let a reduced state space $V_r=\text{span}(\tilde{\varphi}_1,\dots,\tilde{\varphi}_{n_V}) \subset V_h$ and a reduced parameter space $Q_r=\text{span}(\tilde{q}_1,\dots,\tilde{q}_{n_Q}) \subset Q_h$ be given, with dimensions $n_Q  \in \mathbb{N}$ and $n_V \in \mathbb{N}$. We denote the basis matrices as $\bm \Psi_Q \in \R^{N_Q\times n_Q}$, $\bm \Psi_V \in \R^{N_V\times n_V}$ and assume that 
\begin{equation*}
\bm \Psi_V^\top \bm M_V \bm \Psi_V = \bm I_{n_V}
\text{ and } 
\bm \Psi_Q^\top \bm M_Q \bm \Psi_Q= \bm I_{n_Q}.     
\end{equation*}
The corresponding reduced objects are given as
\begin{align*}
    &\bm{M}_{Q,r} \coloneqq \bm I_{n_Q}, \quad \bm{M}_{V,r} \coloneqq \bm I_{n_V},\quad \bm{M}_{H,r} \coloneqq \bm \Psi_V^\top \bm{M}_{H} \bm \Psi_V,\quad  \bm{L}_r^k= \bm\Psi_V^\top \bm{L}_h^k\quad \forall k\in \mathbb K.
\end{align*}
For $q_r\in Q_r$, the reduced operator $\bm{A}_r(q_r)$ can be expressed as
\begin{equation*}
    \bm{A}_r(q_r) = \bm{A}_r(0) + \sum_{j=1}^{n_Q} \bm{q}_{r,j} \bm{A}_r(\tilde{q}_j), \quad \text{for } q_r \in Q_r.
\end{equation*}
Due to the low dimensionality of the reduced parameter space, the number of affine components is small compared to that of $\bm{A}_h(q_r)$, which significantly reduces the computational complexity of assembling $\bm{A}_r(q_r)$, particularly after precomputing the matrices $\bm{A}_r(\tilde{q}_j)$ for $j = 1, \dots, n_Q$. Given a reduced parameter ${q_r \in Q^K_r}$, that we identify with its DoF-matrix ${\bm q_r \in \R^{n_Q\times K}}$, the RB approximation of the state is given as the solution of the reduced primal problem
\begin{equation*}
    \frac{1}{\Delta t}\bm M_{H,r}(\bm{u}_r^k - \bm{u}_r^{k-1}) + \bm A_r(q^k_r)\bm{u}^k_r = \bm L_r^k \quad \forall k \in \mathbb{K},
\end{equation*}
with $\bm u_r^0 = 0$. We write $u_r=\Sol_r(q_r)$ and define $\F_r=\C_h\circ\Sol_r : \qadrK \rightarrow C_h^K$. The reduced admissible set $\qadrK \subset Q^K_r$ is defined similar to \eqref{eq:discCons}. Remark that $\Sol_r$ is well-defined, due to Assumption \ref{ass:bilinear_form}. The reduced objective for $q_r\in Q_r^K$ is thus given by
\begin{align}
    \label{eq:J_r}
    J_r(q_r) \coloneqq 
    & c_1(\bm{y}_h^{\delta}) + {\Delta t}\sum_{k = 1}^K \tfrac{1}{2}\big({\bm{u}_r^{k\top}} \bm{C}_r \bm{u}_r^k \big)-\big({\bm{u}_r^{k\top}} \bm{C}\bm{y^{\delta,k}}_r \big)
\end{align}
with
\begin{align*}
    &\bm{C}_r=\bm\Psi_V^\top \bm C_h \bm\Psi_V,\quad \bm{Cy}^{\delta,k}_r=\bm\Psi_V^\top \bm C\bm{y}_h^{\delta,k} \text{ for }k\in \mathbb K.
\end{align*}
Remark, that online evaluating $J_r(q_r)$, does not require any full-order computations.
The adjoint equation for $p_r\in V_r^K$ reads:
\begin{equation*}
    \frac{1}{\Delta t}\bm M_{H,r}(\bm{p}_r^k - \bm{p}_r^{k+1}) + \bm A_r(q^k_r)^\top\bm{p}^k_r = -\bm C_r\bm{u}_r^k +\bm{Cy}^{\delta,k}_r \quad \text{for all }k \in \mathbb{K} 
\end{equation*}
with $\bm{p}_r^{K+1} = 0$. Hence, the discrete gradient $\nabla J_r(q_r)\in Q_r^K$ can be computed by
\begin{equation*}
    \bm{\nabla J}_r(q_r)^k = \bm B_r(u_r^k)^\top \bm{p}^k_r  \quad \forall k \in \mathbb{K}
\end{equation*}
where $\bm B_r(u_r^k)=\bm\Psi_V^\top \bm B(u_r^k)\bm\Psi_Q\in \R^{n_V\times n_Q}$. The reduced optimization problem can thus be stated as
\begin{equation}
\label{prob:prob_ROM}
	\tag{$\textbf{IP}_{r}$} 
	\min{J_r(q_r)}\text{ s.t. } q_r \in \qadrK.
\end{equation}
Setting ${\tilde{\bm{ q}}}_{r,0}^{k}=\bm \Psi_Q^\top \bm M_Q\tilde{\bm{ q}}_{h,0}^{k}$, and $\bm{C}\tilde{\bm{y}}^{\delta,k}_r={\bm{Cy}^{\delta,k}_r}-\bm{C}_r \bm{u}_r^{k}$ and the constant parts to
\begin{align*}
    &c_2(\tilde{\bm{y}}^{\delta,k}_r) \coloneqq c_1(\bm{y}_h^{\delta})+ {\Delta t} \sum_{k = 1}^K\tfrac{1}{2}\big({\bm{u}_r^{k\top}} \bm C_r {\bm{u}_r^{k}} \big)-\big({\bm{u}_r^{k\top}} \bm{Cy}^{\delta,k}_r\big),\\
    &c_3({\tilde{\bm{q}}}_{\circ,r})\coloneqq  {\Delta t} \sum_{k = 1}^K\tfrac{1}{2} \tilde{\bm{q}}_{\circ,r}^{k\top} \tilde{\bm{q}}_{\circ,r}^k,
\end{align*}
the linearized reduced objective $\widetilde{J}_r$ is defined as
\begin{align*}
    \widetilde{J}_r(d_r; q_r, \alpha)  \coloneqq &  c_2(\tilde{\bm{y}}^{\delta,k}_r)+\Delta t \sum_{k=1}^K \left[
    \tfrac{1}{2} \tilde{\bm{u}}_r^{k\top} \bm{C}_r\tilde{\bm{u}}_r^k 
    - \tilde{\bm{u}}_r^{k\top} \bm{C}\tilde{\bm{y}}^{\delta,k}_r
    \right] \\
    & +  \alpha c_3({\tilde{\bm{q}}}_{\circ,r})+ \alpha \Delta t \sum_{k=1}^K \left[
    \tfrac{1}{2} \bm{d}_r^{k\top} \bm{d}_r^k 
    - \bm{d}_r^{k\top} \tilde{\bm{q}}_{\circ,r}^k  \right],
\end{align*}
where we have used $\bm M_{Q,r}=\bm I_{n_Q}$. The reduced linearized subproblem is consequently given by
\begin{align}
\label{prob:lin_prob_rom}
	\tag{$\widetilde{\textbf{IP}}_{\alpha, r}$}
    d^\iteridx_r	:= \argmin \tilde{J}_r(d_r; q^\iteridx_r, \alpha^\iteridx) \text{ s.t. } q^\iteridx_r + d_r \in \qadrK.
\end{align}
and the reduced discrepancy principle for $0 < \theta < \Theta < 2$ by
\begin{equation}
	\label{eq:choice_of_alpha_ROM}
	\theta J_r(q_r^\iteridx)\leq 2\widetilde{J}_r(d^\iteridx_r; q^\iteridx_r, 0) \leq \Theta J_r(q_r^\iteridx).
\end{equation}
The coefficients of the gradient of $\tilde J_r$ at $q_r \in \qadrK$ are given by 
\begin{equation*}
    \bm{\nabla_d} \widetilde{\bm{J}}_r(d_r; q_r, \alpha)^k = \bm B_r(u_r^k)^\top \tilde{\bm{p}}^k_r + \alpha(\bm{d}_r^k-\tilde {\bm{q}}_{\circ, r}^k) \quad \forall k \in \mathbb{K}.
\end{equation*} 
The reduced linearized state $\tilde u_r\in V_r^K$ is obtained by solving
\begin{align}
	\label{eq:state_prob_ROM}
    \frac{1}{\Delta t}\bm M_{H,r}(\tilde{\bm{ u}}_r^{k} - \tilde{\bm{ u}}_r^{k-1}) + \bm A_r(q^{k}_r)\tilde{\bm{ u}}_r^{k} + \bm B_r(u_r^{k})\bm{d}^{k}_r = 0 \quad \forall k\in \mathbb{K},  
\end{align}
with $\tilde{\bm{u}}^0_r = 0$ and $u_r = \Sol_r(q_r)$ and reduced linearized adjoint state $\tilde p_r\in V_r^K$ by solving
\begin{align}
	\label{eq:adj_prob_ROM}
    \frac{1}{\Delta t}\bm M_{H,r}(\tilde{\bm{ p}}_r^{k} -\tilde{\bm{ p}}_r^{k+1}) + \bm A_r(q^{k}_r)^\top\tilde{\bm{ p}}_r^{k} = -\bm C_r \tilde{\bm{u}}^{k}_h + \bm{C}\tilde{\bm{y}}^{\delta,k}_r  \quad \forall k\in \mathbb{K},
\end{align}
with $\tilde{\bm{p}}_r^{K+1}=0$. 


%
\subsubsection{Error estimation}
\label{ssec:errorest}
To quantify the error between $J_r(q_r)$ and $J_h(q_r)$ for $q_r \in Q_r^K$, we employ a residual-based a posteriori error estimator. To this end, we define the primal residual ${\bm{r}^k_{\mathrm{pr}}(u; q) \in V_h'}$ for $k \in \mathbb{K}$, $q \in Q_h^K$ and $u \in V_h^K$:
\begin{equation}
	\label{eq:primal_residual}
    \bm{r}^k_\text{pr}(u; q) := \bm{L}_h^k - \bm{A}_h(q^k)\bm{u}^k - \frac{1}{\Delta t}\bm{M}_h(\bm{u}^k - \bm{u}^{k-1}) \in \mathbb{R}^{N_V}
\end{equation}
Similarly for $p \in V_h^K$, define the adjoint residual $\bm{r}^k_\text{ad}(u, p; q)\in V'_h$, by setting
\begin{equation}
	\label{eq:adjoint_residual}
    \bm{r}^k_\text{ad}(u,p; q) := -\bm{C}_h \bm{u}^k + \bm{C}\bm{y}_h^{\delta,k} -\bm{A}_h(q^k)^\top\bm{p}^k - \frac{1}{\Delta t}\bm{M}_h(\bm{p}^k - \bm{p}^{k+1}) \in \mathbb{R}^{N_V},
\end{equation}
for all $k \in \mathbb{K}$. Based on these definitions, we are now in a position to derive an a posteriori error estimator for $J_r$.
\begin{proposition}
\label{prop:a_posterori_error_est}
Let $q_r \in \qadrK$ and $a_{q^k_r} \geq  a >0$ be the $q$-dependent coercivity constant for $\bm{A}_h(q_r^k)$ from Assumption \ref{ass:bilinear_form}.
Then it holds 
\begin{equation*}
|J_r(q_r) - J_h(q_r)| \leq \Delta^{J}(q_r),
\end{equation*}
where the right-hand side is given by
\begin{align*}
    \Delta^{J}(q_r) &:= \left( \Delta t \sum_{k = 1}^K \|\bm{r}^k_\text{ad}(u_r,p_r; q_r)\|^2_{V'_h}\right)^{\frac{1}{2}} \frac{\Delta^{\text{pr}}(q_r)}{\sqrt{a_{q_r}}} + \frac{\|\mathcal C\|^2_{\mathcal L (V, C)}}{2 a_{q_r}} \Delta^{\text{pr}}(q_r)^2\text{ and }\\    
    \Delta^{\text{pr}}(q_r) & := \left(\Delta t \sum_{k = 1}^K \frac{1}{a_{q_r}}\|\bm{r}^k_\text{pr}(u_r; q_r)\|_{V'_h}^2\right)^{\frac{1}{2}},
\end{align*}
where $a_{q_r} := \min_{k \in \mathbb K} a_{q^k_r}$.
\end{proposition}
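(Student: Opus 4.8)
The plan is to reduce the whole estimate to a single state-error bound, exploiting the Galerkin (projection) structure of the reduced primal and adjoint equations. First I would make the evaluation points explicit: identifying the reduced state $u_r=\Sol_r(q_r)$ with its high-fidelity representative $\hat u_r\coloneqq\bm\Psi_V\bm u_r\in V_h^K$, the identities $\bm u_r^{k\top}\bm C_r\bm u_r^k=\hat{\bm u}_r^{k\top}\bm C_h\hat{\bm u}_r^k$ and $\bm u_r^{k\top}\bm{Cy}_r^{\delta,k}=\hat{\bm u}_r^{k\top}\bm{Cy}_h^{\delta,k}$ show that $J_r(q_r)$ equals the FOM objective evaluated at $\hat u_r$, whereas $J_h(q_r)$ is the same functional evaluated at the true FOM state $u_h\coloneqq\Sol_h(q_r)$. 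Writing $\bm e^k\coloneqq\bm u_h^k-\hat{\bm u}_r^k$ (so $\bm e^0=0$), the target is $|J_h(q_r)-J_r(q_r)|$. Expanding the quadratic objective via $\bm a^\top\bm C_h\bm a-\bm b^\top\bm C_h\bm b=(\bm a-\bm b)^\top\bm C_h(\bm a+\bm b)$ gives the exact representation
\begin{equation*}
J_h(q_r)-J_r(q_r)=\Delta t\sum_{k=1}^K \bm e^{k\top}\big(\bm C_h\hat{\bm u}_r^k-\bm{Cy}_h^{\delta,k}\big)+\frac{\Delta t}{2}\sum_{k=1}^K \bm e^{k\top}\bm C_h\bm e^k,
\end{equation*}
i.e.\ a term linear in $\bm e$ plus a quadratic remainder.

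The heart of the argument is to rewrite the linear term through the embedded reduced adjoint $\hat p_r\coloneqq\bm\Psi_V\bm p_r$. Using \eqref{eq:adjoint_residual} I would substitute $\bm C_h\hat{\bm u}_r^k-\bm{Cy}_h^{\delta,k}=-\bm r^k_\text{ad}(u_r,p_r;q_r)-\bm A_h(q_r^k)^\top\hat{\bm p}_r^k-\tfrac{1}{\Delta t}\bm M_H(\hat{\bm p}_r^k-\hat{\bm p}_r^{k+1})$ and then transfer the operator from $\bm e$ onto $\hat p_r$. The spatial part is immediate by symmetry of $\bm A_h(q_r^k)$; the discrete time-derivative part is a summation-by-parts (Abel) identity whose only boundary contributions are $\bm e^{0\top}\bm M_H\hat{\bm p}_r^1$ and $\bm e^{K\top}\bm M_H\hat{\bm p}_r^{K+1}$, both of which vanish because $\bm e^0=0$ and the terminal condition $\hat{\bm p}_r^{K+1}=0$. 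Since $u_h$ makes the primal residual vanish and $\bm r^k_\text{pr}$ is affine in the state, the error solves $\bm A_h(q_r^k)\bm e^k+\tfrac{1}{\Delta t}\bm M_H(\bm e^k-\bm e^{k-1})=\bm r^k_\text{pr}(u_r;q_r)$, so the transferred term equals $\Delta t\sum_k(\hat{\bm p}_r^k)^\top\bm r^k_\text{pr}(u_r;q_r)$. Here the Galerkin structure enters decisively: because $\hat p_r=\bm\Psi_V\bm p_r$ and $u_r$ solves the projected primal equation, $\bm\Psi_V^\top\bm r^k_\text{pr}(u_r;q_r)=0$, so this contribution is zero and what remains is the clean representation $J_h(q_r)-J_r(q_r)=-\Delta t\sum_k\bm e^{k\top}\bm r^k_\text{ad}(u_r,p_r;q_r)+\tfrac{\Delta t}{2}\sum_k\bm e^{k\top}\bm C_h\bm e^k$.

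It then remains to bound both pieces by $\Delta^J(q_r)$. For the state error I would test the error equation with $\bm e^k$, invoke coercivity from \Cref{ass:bilinear_form} ($\bm e^{k\top}\bm A_h(q_r^k)\bm e^k\geq a_{q_r}\|e^k\|_{V_h}^2$), use the telescoping inequality $\bm e^{k\top}\bm M_H(\bm e^k-\bm e^{k-1})\geq\tfrac12(\|e^k\|_H^2-\|e^{k-1}\|_H^2)$, and apply Young's inequality to $\bm e^{k\top}\bm r^k_\text{pr}\leq\|e^k\|_{V_h}\|\bm r^k_\text{pr}\|_{V_h'}$. Summing over $k$ and discarding the nonnegative endpoint $\|e^K\|_H^2$ yields the energy bound $\Delta t\sum_k\|e^k\|_{V_h}^2\leq\tfrac{1}{a_{q_r}}\Delta^\text{pr}(q_r)^2$, i.e.\ $\|e\|_{V_h^K}\leq\Delta^\text{pr}(q_r)/\sqrt{a_{q_r}}$. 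A Cauchy--Schwarz step in time combined with the $V_h$--$V_h'$ duality then bounds the linear term by $\big(\Delta t\sum_k\|\bm r^k_\text{ad}\|_{V_h'}^2\big)^{1/2}\Delta^\text{pr}(q_r)/\sqrt{a_{q_r}}$, and continuity of the observation operator, $\bm e^{k\top}\bm C_h\bm e^k=\|\C_h e^k\|_{C_h}^2\leq\|\mathcal C\|_{\mathcal L(V,C)}^2\|e^k\|_{V_h}^2$, bounds the remainder by $\tfrac{\|\mathcal C\|_{\mathcal L(V,C)}^2}{2a_{q_r}}\Delta^\text{pr}(q_r)^2$; adding the two gives $|J_r(q_r)-J_h(q_r)|\leq\Delta^J(q_r)$.

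The main obstacle, in my view, is the time summation-by-parts: the primal problem runs forward ($\bm e^0=0$) while the adjoint runs backward ($\hat{\bm p}_r^{K+1}=0$), and one must verify that precisely these two boundary terms appear and cancel. Coupled with the use of Galerkin orthogonality to annihilate the primal-residual/adjoint cross-term, this is exactly what removes the otherwise first-order (in the primal residual) error contribution and leaves only the computable estimator; the energy estimate and the operator-continuity bound are then routine.
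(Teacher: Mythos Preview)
Your argument is correct and is exactly the approach the paper has in mind: the paper's proof merely cites \cite[Lemma~8, Theorem~9]{QGVW17} and remarks that the ``last term'' there can be dropped because the reduced primal and adjoint live in the same space $V_r^K$---which is precisely your Galerkin-orthogonality step $\bm\Psi_V^\top\bm r^k_\text{pr}(u_r;q_r)=0$ killing the cross-term $\Delta t\sum_k\hat{\bm p}_r^{k\top}\bm r^k_\text{pr}$. One cosmetic remark: you do not need ``symmetry of $\bm A_h(q_r^k)$'' for the spatial transfer, only the tautology $\bm e^{k\top}\bm A_h(q_r^k)^\top\hat{\bm p}_r^k=\hat{\bm p}_r^{k\top}\bm A_h(q_r^k)\bm e^k$; the paper's setting allows nonsymmetric $A$.
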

\begin{proof}
This bound follows directly from the argumentation in \cite[Lemma 8, Theorem 9]{QGVW17}, where the last term in \cite[Theorem 9]{QGVW17} can be omitted, because the reduced primal $u_r$ and adjoint states $p_r$ are elements of the same space $V_r^K$.
\end{proof}
\begin{remark}
\label{rem:zero_error_cond}
We have the following interpolation property: if $q_r \in Q^K_r$ and $u_h(q_r), p_h(q_r) \in V_r^K$ hold: $\Delta^J(q_r) = 0$.    
\end{remark}
\section{Error-aware trust region IRGNM}
\label{sec:TR-IRGNM}
The key idea of the trust-region method is to replace the original optimization problem~\eqref{eq:minJ} by a sequence of reduced subproblems of the form
\begin{equation}
\label{eq:TR_Problem}
q^{(i+1)}_r \coloneqq \argmin J_r^{(i)}(q_r) \quad \text{s.t.}\quad q_r \in T^{(i)},
\end{equation}
starting from an initial guess $q_r^{(0)} \in \qadK$ and continuing until a termination criterion is met. We define the iteration index set as $\mathbb{I} := \{0, \dotsc, i_*(\delta, \yd)\}$. Here, $J_r^{(i)}$ denotes the reduced objective functional~\eqref{eq:J_r}, defined using the RB-ROMs based on the reduced spaces $Q_r^{(i)} \subset Q_h$ and $V_r^{(i)} \subset V_h$. Each subproblem is (approximately) solved using the IRGNM, as outlined in Algorithm~\ref{algo:IRGNM}, generating a sequence $\{q_r^{(i,l)}\}_{l = 0}^{L^{(i)}} \subset \Qridx$. Figure \ref{fig:TR_illustration} shows a schematic representation of the abstract trust region optimization. The reduced spaces are updated, by snapshots at $q^{(i+1)}_r$, re-adapting the RB-ROM locally to the solution manifold; see Section~\ref{ssec:const_rb} for details.\\
Since snapshot-based reduced models are typically only locally accurate, the optimization is  restricted to error-aware trust regions, cf. ~\cite{QGVW17,Keil2021nonco-54293,Banholzer2022Trust,klein2025multifidelitylearningreducedorder}. Following these approaches, we define the trust region as
\begin{equation*}
T^{(i)} \coloneqq \left\lbrace q_r \in \qadriK \,\bigg|\, \errest(q_r) := \frac{\Delta^{J,(i)}(q_r)}{| J_r^{(i)}(q_r)|} \leq \eta^{(i)} \right\rbrace,
\end{equation*}
where the tolerance $\eta^{(i)} > 0$ controls the radius of the trust region, and $\qadriK \subset \Qad$ denotes the set of admissible reduced parameters associated with the reduced space $Q_r^{(i)}$, analogous to~\eqref{eq:discCons}.

To ensure convergence with respect to the full-order objective $J_h$, additional checks on the minimizers of~\eqref{eq:TR_Problem} are necessary, since local minimization of the surrogate $J_r^{(i)}$ may not guarantee sufficient global reduction in $J_h$. For a given surrogate and trust region, we therefore compute trial solutions $\qtrial$ to the subproblem and accept them only if they satisfy additional decrease conditions (see below). Once a sufficiently accurate local solution $q_r^{(i+1)}$ is obtained, the reduced spaces are updated by enriching the basis with snapshots computed at this solution. The algorithm proposed in this work consists of three main steps, which are outlined and discussed in detail below.

\begin{figure}[t!]
\centering
\includegraphics[width=0.80\textwidth]{./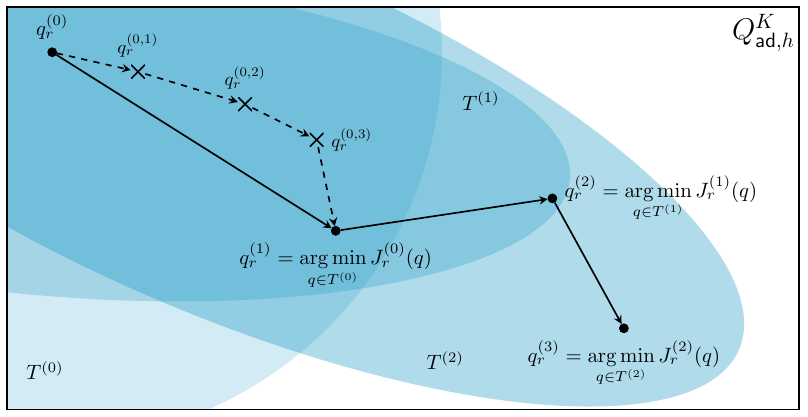}
\caption{\label{fig:TR_illustration} Schematic representation of the abstract trust region algorithm.}
\end{figure}
\subsection{Error-aware reduced basis trust region algorithm}
\label{ssec:rb_TR_IRGNM}
\textbf{Computation of the AGC:} As an initial guess $q_r^{(i,0)} \in Q^\iteridx_r$ for the $i$-th subproblem, we select the approximate solution from the previous subproblem, projected onto the updated parameter space $Q_r^{(i)}$. Since, we will construct the spaces such that always hold ${q_r^{(i)} \in Q_r^{(i)}}$, we can set
$q_r^{(i,0)} = q_r^{(i)}$, see Section~\ref{ssec:const_rb}. Additonally, to ensure well-definedness, it is has to be verfied that the initial guess lies in the current trust region, i.e., $q_r^{(i,0)} \in T^\iteridx$. Note that Remark~\ref{rem:zero_error_cond} guarantees that always reduced spaces $Q_r^\iteridx$ and $V_r^\iteridx$ can be constructed to satisfy this condition. If $q_r^{(i,0)} \notin T^\iteridx$, the reduced spaces will be adapted by enriching the bases with the most relevant POD modes of $q_r^{(i)}$, $u_h(q_r^{(i)})$ and $p_h(q_r^{(i)})$, respectively; see Section~\ref{ssec:const_rb} for details. This procedure is repeated iteratively, if necessary, until a suitable $q_r^{(i,0)}$ is obtained.

The first iterate $q_r^{(i,1)}$ in each subproblem is computed as the approximate generalized Cauchy point (AGC). The AGC, denoted by ${q^\iteridx_{\text{AGC},r} \in Q_r^\iteridx}$, is defined as
\begin{equation*}
q^\iteridx_{\text{AGC},r} = P_{\qadriK}\left(q_r^{(i, 0)} - t \nabla J^\iteridx_r(q_r^{(i, 0)})\right),
\end{equation*}
where $t > 0$ is chosen such that $q^\iteridx_{\text{AGC},r} \in T^\iteridx$ and the following sufficient decay condition holds:
\begin{equation}
\label{eq:armijo_decay_condition}
J^\iteridx_r(q^\iteridx_{\text{AGC},r}) - J^\iteridx_r(q_r^{(i, 0)}) \leq - \frac{\kappa_\text{arm}}{t} \|q^\iteridx_{\text{AGC},r} - q_r^{(i, 0)}\|^2_{Q^K_h},
\end{equation}
with some $\kappa_\text{arm} > 0$. The step size $t$ is determined by a backtracking line search initialized with
\begin{equation*}
\bar{t} = \min \{0.5 \|\nabla J_h(q^{(0)})\|^{-1}_{Q^K_h}, 1\}.
\end{equation*}
To assert a decay in $J_h$, we additionally require that
\begin{equation}
\label{eq:AGC_decay_cond}
J^\iteridx_r(q^\iteridx_{\text{AGC},r}) < J_h(q_r^{(i)}).
\end{equation}
This condition is automatically satisfied if 
\begin{equation*}
J^\iteridx_r(q_r^{(i, 0)}) = J_h(q_r^{(i, 0)}) \quad \text{and} \quad \nabla J^\iteridx_r(q_r^{(i, 0)}) = \nabla J_h(q_r^{(i, 0)}).    
\end{equation*}
Therefore, if condition~\eqref{eq:AGC_decay_cond} is violated, the reduced spaces are iteratively enriched by including the most relevant POD modes of $\nabla J_h(q_r^{(i, 0)})$, $u_h(q_r^{(i)})$ and $p_h(q_r^{(i)})$ into the respective bases and recomputing the AGC.
This procedure is repeated until condition~\eqref{eq:AGC_decay_cond} is satisfied, which can be ensured after finitely many updates, see Remark~\ref{rem:zero_error_cond}. Is the resulting $q^\iteridx_{\text{AGC},r}$ already close to the boundary of the trust region, i.e.,
\begin{equation}
\label{eq:TR_condition}
\beta_1 \eta^\iteridx \leq \errest(q^\iteridx_{\text{AGC},r}),
\end{equation}
for some $\beta_1 \in (0,1)$, then the AGC it is directly accepted as iterate, i.e. ${q^{(i+1)}_r := q^\iteridx_{\text{AGC},r}}$. Condition \eqref{eq:TR_condition} is a common termination criteria for a trust-region algorithm to avoid excessive computational effort near the boundary, where the accuracy of the RB-ROM degrades, cf.~\cite{QGVW17,Keil2021nonco-54293}.

\vspace{\baselineskip}
\textbf{The trust region subproblem:} If condition~\eqref{eq:TR_condition} does not hold, we apply the IRGNM using the RB-ROM to solve the TR subproblem \eqref{eq:TR_Problem}. Starting from $q_r^{(i,1)} := q^\iteridx_{\text{AGC},r}$, the iterates $q_r^{(i,l)}$ are for $l \in \mathbb{N}$ computed as
\begin{equation}
    \label{eq:TR_IRGNM_update}
    q_r^{(i,l+1)} := q_r^{(i,l)} + t_l d_r^{(i,l)} \in \qadriK,
\end{equation}
with step size $t_l \in [0,1]$. The directions $d_r^{(i,l)}$ are the solutions to \eqref{prob:lin_prob_rom} at $q_r^{(i,l)}$, using as regularization parameter $\alpha^{(i,l)} > 0$, such that \eqref{eq:choice_of_alpha_ROM} is satisfied. The trust region constraint is enforced using an Armijo-type backtracking line search. The value of $t_l$ is reduced iteratively by half until $q_r^{(i, l+1)} \in T^{(i)}$ and the decay condition~\eqref{eq:armijo_decay_condition} is satisfied. The initial step size is chosen as $\bar{t}_l = 1$ for $l = 1$, and as $\bar{t}_l = \min\{2t_{l-1}, 1\}$ otherwise.

\begin{figure}[b]
\centering
\includegraphics[width=0.80\textwidth]{./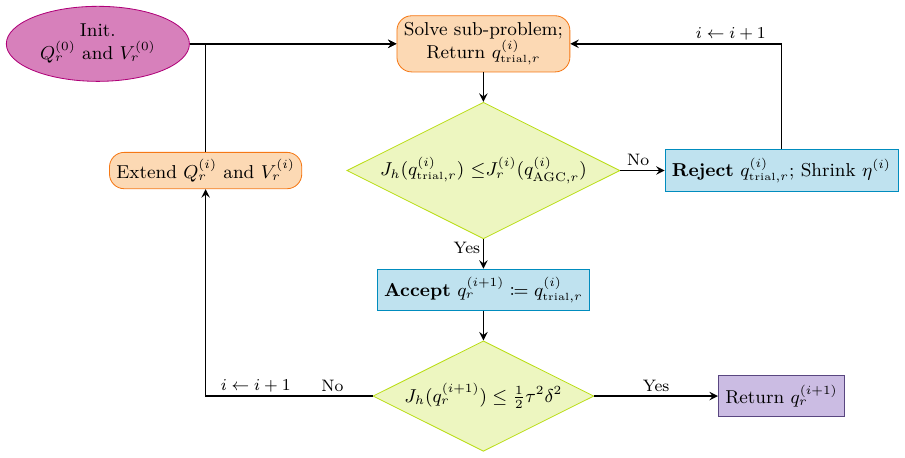}
\caption{\label{fig:flowchart} Simplified flowchart illustrating the main steps in TR-IRGNM.}
\end{figure}
The regularization parameter $\alpha^{(i,l)}$ is determined iteratively. Starting from an initial regularization parameter $\alpha^{(i,l)}_0$, we solve problem~\eqref{prob:lin_prob_rom} and check whether condition~\eqref{eq:choice_of_alpha_ROM} is satisfied. If the left-hand inequality in~\eqref{eq:choice_of_alpha_ROM} is violated, the parameter $\alpha^{(i,l)}$ is doubled and $d_r^{(i,l)}$ recomputed. If the right-hand inequality fails and $\alpha^{(i,l)} > 10^{-14}$, the parameter is halved and $d_r^{(i,l)}$ recomputed. This process is repeated until~\eqref{eq:choice_of_alpha_ROM} is satisfied, as outlined in Algorithm~\ref{algo:IRGNM}. 
For $l > 1$, we set ${\alpha^{(i,l)}_0 := \alpha^{(i,l-1)}}$, i.e., the most recently accepted regularization parameter, and for $l = 1$, we set $\alpha^{(i,1)}_0 \coloneqq \alpha^{(i-1,2)}$. The initial regularization parameter is defined as $\alpha^{(0,1)}_0 := \tilde{\alpha} > 0$.

The IRGNM terminates, and the current iterate $q_r^{(i,l)}$ is returned as trial solution to \eqref{eq:TR_Problem}, i.e., $\qtrial := q_r^{(i,l)}$, if one of the following conditions is met. First, the reduced discrepancy principle is satisfied:
\begin{equation}
    \label{eq:red_disc_princ}
    J^\iteridx_r(q_r^{(i,l)}) < \frac{1}{2} \tilde{\tau}^2 (\tilde{\delta}^\iteridx)^2,
\end{equation}
where $\tilde{\delta}^\iteridx \geq \delta$ denotes a possibly modified noise level, and $\tilde{\tau} > 1$. Alternatively, termination occurs if the iterate approaches the boundary of the trust region, that is,
\begin{equation}
    \label{eq:tr_bound_cond}
    \beta_1 \eta^\iteridx \leq \errest(q_r^{(i,l)}),
\end{equation}
which ensures reliable progress near the boundary and prevents inaccurate updates.

\vspace{\baselineskip}
\textbf{Acceptance of the trial step and modification of the TR radius:}
After computing the trial step $\qtrial$ using the reduced-order model, we must verify that the following sufficient decrease condition is satisfied:
\begin{equation}
\label{eq:EASDC}
J_h(\qtrial) \leq J_r^\iteridx(q_{\text{AGC},r}^\iteridx).
\end{equation}
This criterion, together with condition~\eqref{eq:AGC_decay_cond}, guarantees a decrease in the high-fidelity objective, i.e.,
\begin{equation*}
J_h(\qtrial) < J_h(q^\iteridx_r) \text{ for all } i \in \mathbb{I}.
\end{equation*}
However, verifying condition~\eqref{eq:EASDC} requires the expensive evaluation of the full-order objective $J_h$ at $\qtrial$, even though $\qtrial$ might be rejected afterwards. To address this issue, we first evaluate a cheap sufficient condition
\begin{equation}
\label{eq:suff_cond}
J_r^\iteridx(\qtrial) + \Delta^{J, \iteridx}(\qtrial) < J_r^\iteridx(q_{\text{AGC},r}^\iteridx)
\end{equation}
and a necessary condition
\begin{equation}
\label{eq:nec_cond}
J_r^\iteridx(\qtrial) - 
\Delta^{J, \iteridx}(\qtrial) \leq J_r^\iteridx(q_{\text{AGC},r}^\iteridx)
\end{equation}
to~\eqref{eq:EASDC}; see~\cite{YueMeerbergen2013, Keil2021nonco-54293}. Only, if this is inconclusive, we check \eqref{eq:EASDC} directly. If $\qtrial$ is accepted as the local minimizer $q_r^{(i+1)}$, the error tolerance $\eta^\iteridx$ is increased, provided that the reduced model accurately predicts the actual objective reduction. This is the case if
\begin{equation*}
\varrho^\iteridx \coloneqq \frac{J_h(q_r^\iteridx) - J_h(q_r^{(i+1)})}{J_r^\iteridx(q_r^\iteridx) - J_r^{(i+1)}(q_r^{(i+1)})} \geq \beta_2, \quad \text{with } \beta_2 \in \left[\nicefrac{3}{4}, 1\right).
\end{equation*}
In the case of rejection, the subproblem \eqref{eq:TR_Problem} is resolved, $\eta^\iteridx$ is reduced to $\beta_3 \eta^\iteridx$ for $\beta_3 \in (0,1)$.

An exception to this procedure arises when the AGC is returned as the trial parameter, i.e., $\qtrial = q^\iteridx_{\text{AGC},r}$. Then $q^\iteridx_{\text{AGC},r}$ is directly accepted as the next iterate, i.e. $q_r^{(i+1)} := q^\iteridx_{\text{AGC},r}$. The reduced spaces are then updated accordingly, and the trust region is contracted by setting $\eta^\iteridx := \beta_3 \eta^\iteridx$.
\begin{remark}
The IRGNM procedure described above does not guarantee that a trial solution $\qtrial$ satisfying~\eqref{eq:EASDC} will be found. The algorithm could, in principle, enter an infinite loop, leading to stagnation of the optimization process. However, we point out that such a case was never observed in the numerical experiments presented in Section~\ref{sec:num_exper}. Following, e.g., in~\cite{Keil2021nonco-54293}, we therefore assume that stagnation does not occur.
\end{remark}
\begin{algorithm2e}
\DontPrintSemicolon
\caption{TR-IRGNM}\label{algo:TR_IRGNM}
\small
\KwData{Noise level $\delta$, discrepancy parameter $\tau, \tilde{\tau} > 1$, initial guess $q^{(0)} \in \Q$, initial regularization $\tilde{\alpha}$, regularization center $q_\circ$, Armijo parameter $\kappa_\text{arm} > 0$, boundary parameter $\beta_1 \in (0,1)$, tolerance for enlargement of the radius $\beta_2 \in [3/4,1)$ and shrinking factor $\beta_3 \in (0,1)$.\;}
Set $k = 0$ and initialize the RB-ROM by constructing $Q_r^{(0)} \subset Q_h$ and $V_r^{(0)} \subset V_h$ as in \eqref{eq:Qbasis_init} and \eqref{eq:Vbasis_init}, respectively.\;
\While{$J_h(q_r^{(i,l)}) > \frac{1}{2}\tau^2 \delta^2$}{
   Compute $q^\iteridx_{\text{AGC},r}$ according \eqref{eq:armijo_decay_condition}. \label{algo:line:1}\;
	\If{$q_r^{(i,0)} \notin T^\iteridx$ or not \eqref{eq:AGC_decay_cond}}{Update $Q_r^\iteridx$ and $V_r^\iteridx$ as defined in Section~\ref{ssec:const_rb}, 
    and go to line~\ref{algo:line:1}.
    }
	\If{\eqref{eq:TR_condition}}{
		Set $\qtrial = q^\iteridx_{\text{AGC},r}$.\;
	} \Else{
		Solve \eqref{prob:prob_ROM} for $\qtrial$ as defined in Section~\ref{ssec:rb_TR_IRGNM}.
		
	} 
	\If{$\qtrial = q^\iteridx_{\text{AGC},r}$}{
		Accept $q_r^{(i+1)} = \qtrial$, update $Q_r^\iteridx$ and $V_r^\iteridx$ at $q_r^{(i+1)}$, as explained in Section~\ref{ssec:const_rb}.\;
		Shrink radius $\eta^{(i+1)} = \beta_3\eta^\iteridx$, and go to line~\ref{line:final_line}.
	}
	\If{\eqref{eq:suff_cond}}{
		Accept trial step as $q_r^{(i+1)} = \qtrial$, update $Q_r^\iteridx$ and $V_r^\iteridx$ at $q_r^{(i+1)}$, as explained in Section~\ref{ssec:const_rb}.\;
		\If{$\varrho^\iteridx > \beta_2$}{
				Enlarge radius $\eta^{(i+1)} = \beta_3^{-1}\eta^\iteridx$.\;
			}
	} \ElseIf{not \eqref{eq:nec_cond}}{
		Reject trial step, set $q_r^{(i+1)} = q_r^\iteridx$, keep $Q_r^\iteridx$ and $V_r^\iteridx$ and shrink radius $\eta^{(i+1)} = \beta_3\eta^\iteridx$.\;
	} \Else {
		\If{\eqref{eq:EASDC}}{
			Accept the trial step as $q_r^{(i+1)} = q^\iteridx_{\text{AGC},r}$, update $Q_r^\iteridx$ and $V_r^\iteridx$ at $q_r^{(i+1)}$, as explained in Section~\ref{ssec:const_rb}.\;
			\If{$\varrho^\iteridx > \beta_2$}{
				Enlarge radius $\eta^{(i+1)} = \beta_3^{-1}\eta^\iteridx$.\;
			}
		} \Else {
			Reject trial step, set $q_r^{(i+1)} = q_r^\iteridx$, keep $Q_r^\iteridx$ and $V_r^\iteridx$ and shrink radius $\eta^{(i+1)} = \beta_3\eta^\iteridx$.\;
		}
	}
	In case of acceptance update $\tilde{\delta}^{(i)}$, $\alpha_0^{(i)}$\;
	$i \leftarrow i + 1$. \label{line:final_line}\;	
}
\end{algorithm2e}
\subsection{Construction of the reduced spaces}
\label{ssec:const_rb}
An essential aspect of the TR-IRGNM is the construction of the reduced spaces $Q_r^\iteridx$ and $V_r^\iteridx$. The general goal thereby is to keep the dimensionality of these spaces as low as possible (for a fast computation) while preserving a high level of approximation accuracy. Various strategies exist for constructing suitable reduced spaces. In this work, we focus on \emph{adaptive enrichment}  startegies~\cite{keil2022adaptive, kartmann_adaptive_2023, QGVW17}, which extends the reduced bases by incorporating snapshots obtained during the optimization. Note that other basis update strategies also are compatible with Algorithm~\ref{algo:TR_IRGNM}, such as skipping enrichment steps or removing basis vectors that became obsolete; see, e.g.,~\cite{Banholzer2020AnAP, Banholzer2022Trust}.

\vspace{\baselineskip}
\textbf{Adaptive enrichment:}
The bases of $Q_r^\iteridx$ and $V_r^\iteridx$ are iteratively enriched using full-order solutions at $q_r^\iteridx$, resulting in spaces with increasing dimensions:
\begin{equation*}
Q_r^{(0)} \subset Q_r^{(1)} \subset \dots \subset Q_r^{(i_*(\delta,\yd))} \subset Q_h,
\quad
V_r^{(0)} \subset V_r^{(1)} \subset \dots \subset V_r^{(i_*(\delta,\yd))} \subset V_h.
\end{equation*}
Following the argumentation in Section~\ref{ssec:rb_TR_IRGNM} and the methodology introduced in~\cite{kartmann_adaptive_2023}, we aim to enrich the reduced parameter spaces by incorporating full-order gradients ${\nabla J_h(q_r^{(i)}) \in Q_h^K}$. These gradients are promising candidates for new basis vectors, as they represent directions of steepest descent of the objective functional $J_h$ at the reduced parameter $q_r^{(i)}$; cf.~Section~\ref{ssec:rb_TR_IRGNM}. However, as discussed in~\cite{kartmann_adaptive_2023}, alternative enrichment strategies may also be employed to improve local descent behavior. For instance, second-order information such as the Hessian products of $J_h$ in certain directions can be incorporated to guide basis enrichment more effectively. For the reduced state spaces, we add the full-order primal and adjoint trajectories at $q_r^{(i)}$, namely $u_h(q_r^{(i)}) \in V_h^K$ and $p(q_r^{(i)}) \in V_h^K$, as basis vectors. This is a standard approach in trust-region reduced basis (TR-RB) methods; see~\cite{QGVW17, Keil2021nonco-54293, keil2022adaptive, Banholzer2020AnAP, Banholzer2022Trust, klein2025multifidelitylearningreducedorder}. It ensures accurate local approximations of the full-order model within the reduced framework.

To avoid enriching the basis with vectors that contribute little additional information and unnecessarily increase the reduced space dimension, we apply Proper Orthogonal Decomposition (POD); cf. \cite{BBMV24,Pin08}. For the reduced parameter space (and analogously for the state space), basis vectors $B^\iteridx \subset Q_h$ are selected from a set of local snapshots $E^\iteridx \subset Q_h$, e.g., $E^\iteridx = \{\nabla J^1_h(q_r^{(i)}) , \dots, \nabla J^K_h(q_r^{(i)})\}$, such that they optimally represent the data $E^\iteridx$ up to a tolerance $\epsilon_\text{POD} > 0$ in a least-squares sense, i.e.,
\begin{equation*}
B^\iteridx = \argmin_{W \subset Q_h} \dim(\mathrm{span}(W)) \quad \text{s.t.} \quad \sum_{e \in E^\iteridx} \| e - \Pi_{\mathrm{span}(W)} e \|_{Q_h}^2 < \epsilon^2_\text{POD},
\end{equation*}
where $\Pi_{\mathrm{span}(W)}$ denotes the orthogonal projection onto $\mathrm{span}(W)$. The current basis is augmented by the selected vectors $B^\iteridx$ and subsequently re-orthogonalized. In practice, we employ hierarchical approximate POD (HaPOD)~\cite{himpe2018hierarchical}, a variant of POD that enables improved computational efficiency through parallelization.

Let $\Psi_Q^{(i)} \subset Q_h$ denote the basis of $Q_r^{(i)}$, i.e., $Q_r^{(i)} = \mathrm{span}( \Psi_Q^{(i)})$, and let similarly ${\Psi_V^{(i)} \subset V_h}$ denote the basis of $V_r^{(i)}$. The basis of the initial parameter space is defined as
\begin{equation}\label{eq:Qbasis_init}
\bm{\Psi}_Q^{(0)} := \texttt{HaPOD}\left[\left\{\bm{q}_{\circ,h},\, \bm{q}_r^{(0)},\, \bm{\nabla J}_h(q_r^{(0)})\right\}, \epsilon_\text{POD}\right].
\end{equation}
For $i \in \mathbb{I}$, after obtaining an iterate $q^{(i+1)} \coloneqq \qtrial$ satisfying \eqref{eq:EASDC}, the basis is updated via
\begin{equation*}
\bm{\Psi}_Q^{(i+1)} := \texttt{orthog.}\left[\bm{\Psi}_Q^{(i)} \cup \texttt{HaPOD}[\bm{E}^{(i)}, \epsilon_\text{POD}]\right],
\end{equation*}
where $\bm{E}^{(i)} := \nabla \bm{J}_h(q_r^{(i+1)}) \in \mathbb{R}^{N_Q \times K}$. The reduced state spaces $V_r^{(i)}$ are constructed analogously using full-order primal and adjoint states instead of gradients, that is,
\begin{equation*}
\bm{E}^{(i)} := \left\{\bm{u}_h(q_r^{(i+1)}),\, \bm{p}_h(q_r^{(i+1)})\right\} \in \mathbb{R}^{N_V \times 2K},
\end{equation*}
for $i\in \mathbb{I}$. The initial basis is given by
\begin{equation}\label{eq:Vbasis_init}
\bm{\Psi}_V^{(0)} := \texttt{HaPOD}\left[\left\{\bm{u}_h(q_r^{(0)}),\, \bm{p}_h(q_r^{(0)})\right\}, \epsilon_\text{POD}\right].
\end{equation}
If $q_r^{(i,0)} \notin T^\iteridx$ or $q_{\mathrm{AGC},r}^{(i)}$ does not satisfy \eqref{eq:AGC_decay_cond}, the reduced spaces are updated again using the same rule described above, but with a (iteratively) reduced $\epsilon_\text{POD}$. These rules uniquely define $Q_r^{(i)}$ and $V_r^{(i)}$ for all $i \in \mathbb{I}$.

\begin{remark}
If the parameter field is stationary in time, the POD is not required for the reduced parameter space. In this case, the parameter space is interpolated as for the elliptic problems studied in {\em\cite{kartmann_adaptive_2023}}, i.e.,
\begin{equation*}
\bm{\Psi}_Q^{(i+1)} := \texttt{orthog.}\left[\bm{\Psi}_Q^{(i)} \cup \bm{E}^\iteridx\right].
\end{equation*}
\end{remark}

\vspace{\baselineskip}
\subsection{Technical aspects}
\label{ssec:technical_aspects}
Before proceeding to the numerical experiments, we have to comment on the implementation of the error estimators and the projection operator, since both significantly affect the efficiency of the TR-IRGNM. 
%

\vspace{\baselineskip}
\textbf{Evaluation of the Error Estimator:}  
One of the most computationally expensive steps per evaluation in TR-IRGNM is the error estimation, as it requires computing the full-order residuals and their corresponding Riesz representatives for each time step and parameter value tested. A common strategy to mitigate these costs is to construct an orthonormal basis for the image space of the residuals $r^k_\text{pr}(\cdot\,; \cdot)$ and $r^k_\text{ad}(\cdot, \cdot\,; \cdot)$ and to project the residuals onto this basis; cf.~\cite{buhr2014numerically}.

While this significantly reduces the online evaluation cost, it incurs a high offline cost due to the need to reconstruct the image basis and reproject each time  the reduced spaces are adapted. This trade-off can negate the benefits of the reduced online cost, particularly in algorithms like TR-IRGNM, where the reduced spaces are iteratively refined. 
Since the error estimators are only used during the backtracking of $q^{(i,l)}_r + d^{(i,l)}_r$ into the trust region and are not required to compute $d^{(i,l)}_r$ by solving \eqref{prob:lin_prob_rom}, the offline cost outweighs the computational savings during parameter evaluation. Therefore, we instead compute the error estimates directly using the full-order residuals, avoiding the offline projection entirely.

\vspace{\baselineskip}
\textbf{Implementation of the projection operator:}
To implement the projection operator~\eqref{eq:projection_operator}, let assume for the moment that there exist bounds $0 < L < U < \infty$ such that the admissible set is given by
\begin{equation}
\label{eq:simple_bounded_Qad}
\Qad = \left\{ q \in \Q \,\middle|\, L \leq q \leq U \text{ a.e. in } (0,T) \times \Omega \right\}.
\end{equation}
Furthermore, assume that the space $Q_h$ is spanned by first-order Lagrange finite element basis functions. Under these assumptions hold $q_h \in \qadK$ if and only if $\bm{q}_h^k \in [L, U]^{N_Q}$ for all $k \in \mathbb{K}$. The discretized projection operator $P_{\qadK} : Q_h^K \rightarrow  \qadK$ can thus be implemented component-wise:
\begin{equation*}
\bm{P}_{\qadK}(q_h)^k_j \coloneqq 
\begin{cases}
L,& \text{if } \bm{q}^k_j \leq L,\\
\bm{q}^k_j,& \text{if } L < \bm{q}^k_j < U,\\
U,& \text{if } U \leq \bm{q}^k_j ,
\end{cases}
\end{equation*}
for $j \in \{ 1, \dots, N_Q\}$ and $k \in \mathbb{K}$. Although this implementation is straightforward and efficient for the FOM, it presents a drawback for the RB-ROM: a reduced parameter $q_r \in Q_r^K$ must be lifted to its full-order representation, which can be computationally expensive compared to evaluating the RB-ROM itself. To mitigate this problem, we introduce a cheap sufficient condition for verifying $q_r \in \qadrK$, stated in the following lemma.
\newpage
\begin{lemma}
Let $q \in \qadrK$, $d \in Q_r^K$, and let ${B \in \mathbb{R}^{N_Q \times n_Q}}$ be the basis matrix of $Q_r$, i.e., ${B = [\tilde{\bm{q}}_1, \dots, \tilde{\bm{q}}_{n_Q}]}$. Define for all $k \in \mathbb{K}$:
\begin{equation*}
\epsilon^k \coloneqq \min_{1 \leq i \leq N_Q} \left( \frac{U - \bm{q}_i^k}{\|B_{i,:}\|_{\mathbb{R}^{n_Q}}}, \frac{\bm{q}_i^k - L}{\|B_{i,:}\|_{\mathbb{R}^{n_Q}}} \right) \geq 0,
\end{equation*}
where $\bm{q}^k$ is the DoF-vector of $q^k$ with respect to the full-order basis of $Q_h$. If ${\|\bm{d}^k\|_{\mathbb{R}^{n_Q}} \leq \epsilon^k}$ holds for all $k \in \mathbb{K}$, we have $q + d \in \qadrK$.
\end{lemma}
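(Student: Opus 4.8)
The plan is to reduce the membership $q + d \in \qadrK$ to the nodal box constraints that define the admissible set, and then to control the nodal perturbation induced by $d$ through a Cauchy--Schwarz estimate that is calibrated precisely by $\epsilon^k$. Under the simplifying assumption~\eqref{eq:simple_bounded_Qad} and the first-order Lagrange setting, I would first recall that $q + d \in \qadrK$ is equivalent to the full-order DoF-vector of $(q+d)^k$ lying in $[L,U]^{N_Q}$ for every $k \in \mathbb{K}$. Since $q$ and $d$ are reduced objects, their common full-order lift is realized through the basis matrix $B$: writing $\bm{d}^k \in \mathbb{R}^{n_Q}$ for the reduced coefficient vector of $d^k$, the $i$-th full-order nodal value of $d^k$ is $(B\bm{d}^k)_i = B_{i,:}\cdot \bm{d}^k$, so that the $i$-th nodal value of $(q+d)^k$ equals $\bm{q}_i^k + B_{i,:}\cdot\bm{d}^k$.

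Next, for each fixed node $i$ and time index $k$ I would estimate the perturbation by Cauchy--Schwarz,
\begin{equation*}
|B_{i,:}\cdot\bm{d}^k| \le \|B_{i,:}\|_{\mathbb{R}^{n_Q}}\,\|\bm{d}^k\|_{\mathbb{R}^{n_Q}} \le \|B_{i,:}\|_{\mathbb{R}^{n_Q}}\,\epsilon^k,
\end{equation*}
invoking the hypothesis $\|\bm{d}^k\|_{\mathbb{R}^{n_Q}} \le \epsilon^k$. Because $\epsilon^k$ is defined as a minimum over all nodes, both one-sided inequalities $\|B_{i,:}\|\,\epsilon^k \le U - \bm{q}_i^k$ and $\|B_{i,:}\|\,\epsilon^k \le \bm{q}_i^k - L$ hold simultaneously for the given $i$. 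Combining these with the previous display yields $L \le \bm{q}_i^k + B_{i,:}\cdot\bm{d}^k \le U$; since $i$ and $k$ were arbitrary, this is exactly the assertion $q + d \in \qadrK$.

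I expect the only genuine subtlety to be the rows with $\|B_{i,:}\|_{\mathbb{R}^{n_Q}} = 0$, for which the quotients defining $\epsilon^k$ are ill-posed. For such an index the perturbation $B_{i,:}\cdot\bm{d}^k$ vanishes identically, so the nodal value is unchanged and the constraint $L \le \bm{q}_i^k \le U$ is inherited directly from $q \in \qadrK$. I would therefore either exclude these indices from the minimum or adopt the convention that a quotient with vanishing denominator equals $+\infty$, which leaves the bound untouched. Beyond this bookkeeping, the proof is a direct, two-sided application of Cauchy--Schwarz, so no further obstacle is anticipated.
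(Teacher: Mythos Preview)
Your proposal is correct and follows essentially the same route as the paper: the paper bounds $|[B\bm d^k]_i|$ by $\epsilon^k\|B_{i,:}\|_{\mathbb{R}^{n_Q}}$ (via the same Cauchy--Schwarz argument, phrased as a maximum over the $\epsilon^k$-ball) and then invokes the definition of $\epsilon^k$ to conclude. Your handling of the degenerate case $\|B_{i,:}\|_{\mathbb{R}^{n_Q}}=0$ is a welcome bit of extra care that the paper omits.
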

\begin{proof}
For all $i \in \{1, \dots, N_Q\}$ and $k \in \mathbb{K}$, we have
\begin{equation*}
|[B \bm{d}^k]_i| \leq \max_{\|\delta^k\|_{\mathbb{R}^{n_Q}} \leq \epsilon^k} |[B \delta^k]_i| 
= \epsilon^k \|B_{i,:}\|_{\mathbb{R}^{n_Q}} 
\leq \min \left( U - \bm{q}_i^k, \bm{q}_i^k - L \right),
\end{equation*}
which implies that $q + d \in \qadrK$.
\end{proof}
This lemma can be used to compute the projection onto $\qadrK$ more efficiently, needed for solving~\eqref{prob:lin_prob_rom}, with projected gradient decent. Given a current iterate $q_r^{(i,l)} \in \qadrK$, the radii $\epsilon^k$ can be precomputed. Then, for a proposed update ${d_r \in (Q_r)^K}$, we check whether $\|\bm{d}^k\|_{\mathbb{R}^{n_Q}} \leq \epsilon^k$ for all $k \in \mathbb{K}$. In this case, no further computation is required, since $\bm{P}_{\qadrK}(q_r + d_r) = q_r + d_r.$ Otherwise, the update $q_r + d_r$ must be lifted to $Q^K_h$ and the projection carried out as explained before on the FOM level.
\section{Numerical experiments}
\label{sec:num_exper}
In this section, we evaluate the performance of the TR-IRGNM algorithm introduced in Section~\ref{sec:TR-IRGNM} using the benchmark setup described in Example~\ref{ex:default_example}. We consider four distinct scenarios, covering both stationary and time-dependent parameter fields for each case in Example~\ref{ex:default_example}. For each scenario, we apply the following algorithms:
\begin{enumerate}
    \item \textbf{FOM-IRGNM}: The standard IRGNM method, which employs the full-order model to solve the (linearized) primal and adjoint problems (cf. Algorithm~\ref{algo:IRGNM}).
        \item \textbf{TR-IRGNM}: The trust-region IRGNM (Algorithm~\ref{algo:TR_IRGNM}), tested with POD tolerances logarithmically spaced from $10^{-9}$ to $10^{-14}$.
\end{enumerate}
Both methods are implemented in \texttt{Python} using the \texttt{pyMOR} framework~\cite{doi:10.1137/15M1026614}, for model handling and reduction. The source code is available at \cite{source_code}. All experiments were conducted on the PALMA II HPC cluster at the University of Münster, funded by the DFG (INST 211/667-1), using compute nodes equipped with Intel Skylake Gold 6140 CPUs @ 2.30~GHz and 92~GB RAM.

\subsection{Implementation details}
For the numerical experiments, the time interval $[0,1]$ and the spatial domain ${\Omega := (0,1)^2}$ were chosen. The temporal discretization is performed with a time step of $\Delta t = \frac{1}{50}$ (that is, $K = 50$) and the spatial discretization of $Q, V,$ and $C$, using first-order Lagrange finite element (FE) basis functions in quadrilateral cells, resulting in $N_C = N_Q = N_V = 90,601$ degrees of freedom (DOFs). The noisy measurement was artificially constructed, by obtaining the exact solution $u_h^\mathsf e \in V_h^K$ for a reference parameter $q_h^\mathsf e \in Q_h^K$, adding uniformly distributed noise $\xi \in V_h^K \setminus \{0\}$ with noise level $\delta > 0$, defining
\begin{equation*}
y^\delta \coloneqq  \C_h u_h^\mathsf e + \delta \frac{\xi}{\|\xi\|_{V_h^K}}.
\end{equation*}
The noise level was set to $\delta = 10^{-5}$ and the parameters regarding the IRGNM are set to
\begin{align*}
    \theta = 0.4, \quad     
    \Theta = 1.95, \quad 
    \tau = \tilde{\tau} = 3.5, \quad 
    \tilde{\delta}^\iteridx  = \delta.
\end{align*}
The parameters controlling the trust region are chosen as follows: 
\begin{align*}
    \eta^{(0)} = 0.1, \quad
    \beta_1 = 0.95, \quad     
    \beta_2 = 0.75, \quad 
    \beta_3 = 0.5, \quad 
    \tilde{\alpha} = 10^{-5}.
\end{align*}

In all experiments, the constraints defining $\Q_\text{ad}$ were enforced using the projection operator described in Section~\ref{ssec:technical_aspects}, with $q_\text{a} \equiv L = 0.001$ as the lower bound and $U = 10^{3}$ as the upper bound. The reference value for the Tikhonov term was set as the background of the field of ground truth parameters (see below), that is, $q_\circ \equiv 3$. This value was also used as an initial guess for all runs, $q^{(0)} \equiv q_\circ$. The observation operator is the canonical embedding from $V$ to $H$, for which $\| \C\|_{\mathcal{L}(V, H)} = 1$ is valid.

The discretized linearized problems~\eqref{prob:lin_prob_FOM} and \eqref{prob:lin_prob_rom} were solved using a projected gradient descent method, the step size was determined by a Barzilai-Borwein-type line search~\cite{azmi2023nonmonotone}. The optimization procedure was terminated if the first-order optimality condition is satisfied, after a maximum of $10^4$ iterations, or if the objective value remained constant (up to machine precision, $10^{-16}$) for five consecutive iterations. For computing the AGC, we set $\kappa_\text{arm} = 10^{-12}$. The line search was terminated if conditions~\eqref{eq:armijo_decay_condition} and~\eqref{eq:TR_condition} were not satisfied within 100 iterations. 
%
\subsection{Stationary parameter fields}
\subsubsection{Run 1: Reaction}
\label{ssec:run1}
Consider Example \ref{ex:default_example} (i). We are looking to reconstruct a non time-varying reaction field ($\Q = L^2(\Omega)$), governing \eqref{eq:reaction_problem} for $f \equiv 1$. The coercivity constant $\alpha_{q_h}$ for $\bm{A}_h(q_h)$ is set to $1$ for all $q_h \in Q_h$. The exact parameter $q^\mathsf e$ is chosen as the sum of two Gaussian distributions and shifted by the background, i.e. $q^{\mathsf e}(\bm x) = q_\circ(\bm x) + q^{\mathsf e}_1(\bm x) + q^{\mathsf e}_2(\bm x)$ with 
\begin{align*}
q^{\mathsf e}_1(\bm{x}) & = \frac{1}{0.02 \pi } \exp \left( - \frac{1}{2} \left( \left( 
\frac{2x_1 - 0.5}{0.1}\right)^2 + \left( \frac{2x_2 - 0.5}{0.1}\right)^2
\right)\right), \\
q^{\mathsf e}_2(\bm{x}) & = \frac{1}{0.02 \pi } \exp \left( - \frac{1}{2} \left( \left( 
\frac{0.8x_1 - 0.5}{0.1}\right)^2 + \left( \frac{0.8x_2 - 0.5}{0.1}\right)^2
\right)\right).
\end{align*}
\begin{table}[ht]
\small
\setlength{\tabcolsep}{3.7pt}
\begin{tabularx}{\textwidth}{lc|cccccccc}
\toprule
	Algo. & $\epsPOD$ & $L^2$-rel. err. & $H^1$-rel. err. & time [s] & speed up & FOM solves & $n_Q$ & $n_V$ & o. iter \\
\midrule
	FOM & -- & -- & -- & 3396 & -- & 2306 & -- & -- & 13 \\
	TR & 1e-09 & 5.25e-02 & 1.63e-01 & 366 & 9.27 & 14 & 7 & 82 & 6 \\
	TR & 1e-10 & 5.24e-02 & 1.64e-01 & 347 & 9.77 & 14 & 7 & 85 & 6 \\
	TR & 1e-11 & 5.25e-02 & 1.64e-01 & 332 & 10.21 & 14 & 7 & 88 & 6 \\
	TR & 1e-12 & 5.25e-02 & 1.64e-01 & 323 & 10.50 & 14 & 7 & 88 & 6 \\
	TR & 1e-13 & 5.25e-02 & 1.64e-01 & 397 & 8.56 & 14 & 7 & 88 & 6 \\
	TR & 1e-14 & 5.24e-02 & 1.64e-01 & 371 & 9.14 & 14 & 7 & 91 & 6 \\
\bottomrule
\end{tabularx}
\caption{
Run 1: Comparison of FOM- and TR-IRGNM for different tolerances $\epsPOD$. Shown are the $L^2$- and $H^1$-relative errors of $q^{\text{\footnotesize TR}}$ w.r.t. $q^{\text{\footnotesize FOM}}$, total time, speed-up, number of FOM solves, final dimensions of $Q^\iteridx_r$, $V^\iteridx_r$, and outer iterations. The errors use the standard FE norms: $\|q^{\text{\footnotesize FOM}} - q^{\text{\footnotesize TR}}\|_{L^2(\Omega)} / \|q^{\text{\footnotesize FOM}}\|_{L^2(\Omega)}$ and analogously for $H^1$.
}
\label{tab:Run1_comparison}
\end{table}\\
A comparison of FOM- and TR-IRGNM for different POD tolerances $\epsPOD$ is presented in Table~\ref{tab:Run1_comparison}. The runs using TR-IRGNM converge for all tested $\epsPOD$, with a significantly reduced computational time, compared to FOM-IRGNM. The observed speed-up ranges from $8.5$ to $10.5$. Figure~\ref{fig:decays_run_1_and_2} (left) shows the decay of the objective $J_h(q_r^\iteridx)$ plotted against CPU time, for $\epsPOD = 10^{-12}$.
As shown in Table~\ref{tab:Run1_comparison}, TR-IRGNM achieves convergence with substantially fewer FOM evaluations, i.e., fewer solutions of the (linearized) primal and dual problems, compared to FOM-IRGNM. In TR-IRGNM, the FOM is evaluated only to compute $u_h(q_r^\iteridx)$ and $p_h(q_r^\iteridx)$, which serve as snapshots for enriching the reduced spaces $Q_r^\iteridx$ and $V_r^\iteridx$, and for evaluating the full-order discrepancy principle. However, it should be noted that evaluating the residuals in~\eqref{eq:primal_residual} and~\eqref{eq:adjoint_residual}, as well as performing the projection onto the admissible set, still incurs computational costs of order~$\mathcal{O}(N_V)$. All remaining computations are carried out within the reduced-order framework, thereby benefiting from reduced complexity.

\begin{figure}[b]
    \centering
    \includegraphics[width=\textwidth]{./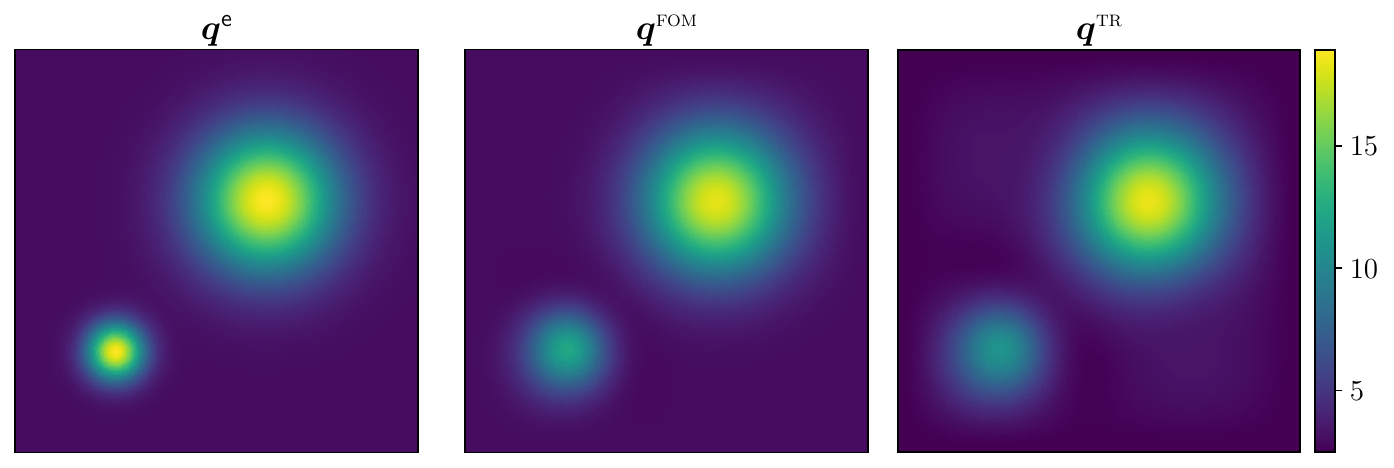} 
	\caption{Run 1: Exact parameter $\bm{q}^\mathsf{e}$ (left) and parameters reconstructed by FOM-IRGNM, $\bm{q}^\text{\footnotesize FOM}$ (middle) and by TR-IRGNM, $\bm{q}^\text{\footnotesize TR}$ (right), for $\epsPOD = 10^{-12}$.}
    \label{fig:run_1_reconst_param}
\end{figure}

\begin{figure}[t]
    \centering
    \includegraphics[width=0.9\textwidth]{./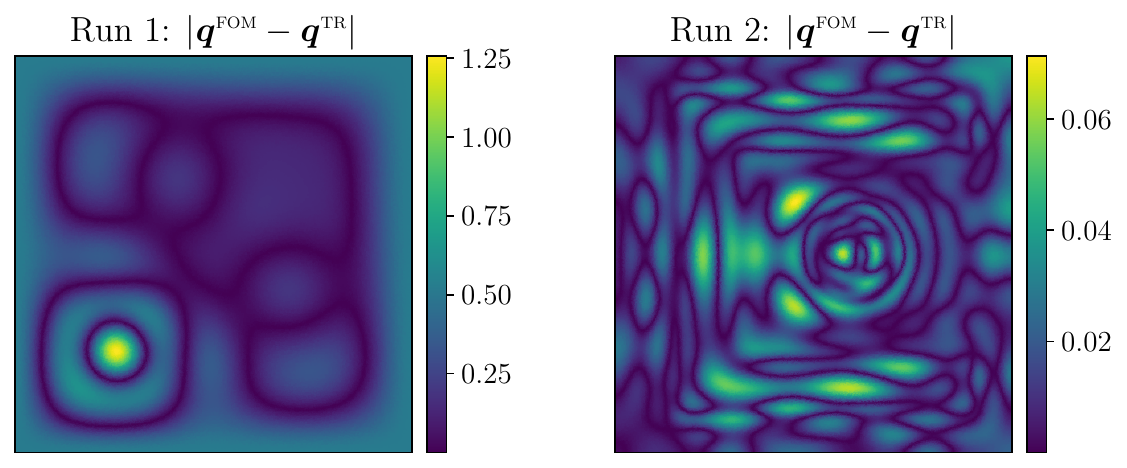} 
    \caption{Absolute difference of the reconstructed parameters obtained by FOM- and TR-TRGNM for Run 1 (left) and Run 2 (right), with $\epsPOD = 10^{-12}$.}
    \label{fig:diffs_run_1_and_2}
\end{figure}

Upon convergence, the dimensions of the (final) reduced spaces are similar across all tested values of~$\epsPOD$. The final dimension of the reduced state space $V_r^\iteridx$ lies between $82$ and $91$, while the reduced parameter space $Q_r^\iteridx$ consistently has a dimension of seven. Recall that at each enrichment step (except the first), only a single snapshot $\nabla_q J_h(q_r^\iteridx)$ is available to enrich $Q_r^\iteridx$, whereas 100 snapshots can be used for $V_r^\iteridx$. The final dimensions of $V_r^\iteridx$ indicate that a subset of the available state snapshots suffices to achieve local approximations within the all POD tolerances~$\epsPOD$ and that the number of available parameter snapshots is a limiting factor in the enrichment process. Increasing the number of parameter snapshots would enhance the quality of the local approximation.

\begin{figure}[b]
    \centering
    \includegraphics[width=0.95\textwidth]{./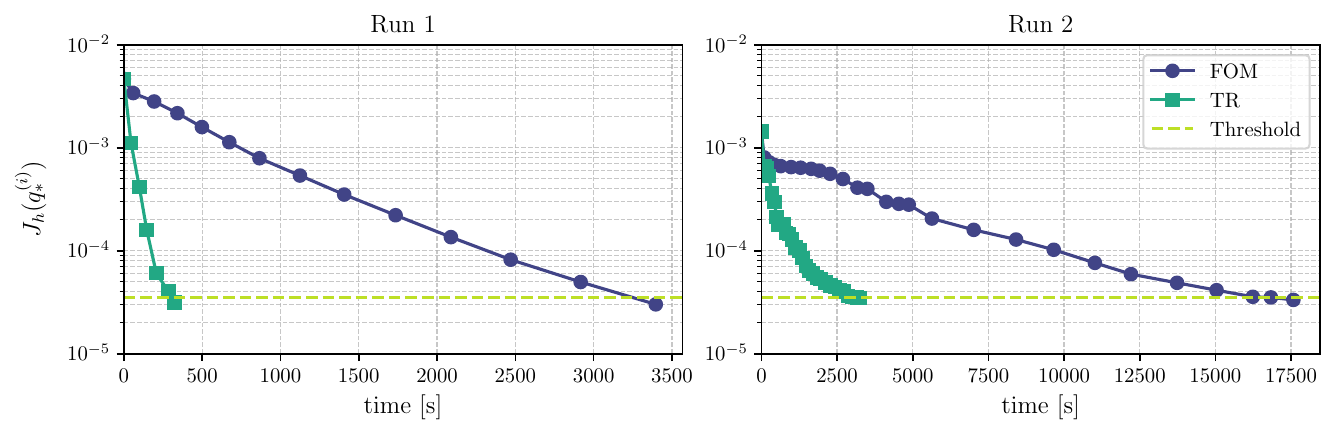} 
    \caption{
		Objective $J_h(q^\iteridx_\ast)$ plotted against the total computation time per outer iteration $i$, for Run 1 (left) and Run 2 (right). In the case of FOM-IRGNM, we set $q_\ast^\iteridx = q_h^\iteridx$ and for TR-IRGNM, $q_\ast^\iteridx = q_r^\iteridx$. The data shown is for the experiment with $\epsPOD = 10^{-12}$.}
    \label{fig:decays_run_1_and_2}
\end{figure}

Despite these limitations, the errors of the parameter fields reconstructed by TR-IRGNM, $q^\text{TR} \in Q_h$, relative to those from FOM-IRGNM, $q^\text{FOM} \in Q_h$, remain reasonably small: approximately $5\%$ in the $L^2(\Omega)$-norm and $16\%$ in the $H^1(\Omega)$-norm. Moreover, all reconstructions successfully capture the dominant features of the exact parameter $q\mathsf{e}$. The reconstructed parameters for $\epsPOD = 10^{-12}$ are shown in Figures~\ref{fig:run_1_reconst_param} and~\ref{fig:diffs_run_1_and_2}. The evolution of the approximate local solutions $q_r^\iteridx$ throughout the optimization process is depicted in Figure~\ref{fig:run1_evolution} in Appendix~\ref{sec:appendix}. There, we observe that, starting from the background parameter ($q_\circ = q^{(0)}$), the reaction coefficient gradually develops into the two distinct local peaks: first the larger peak $q^{\mathsf{e}}_1$, followed by the emergence of the smaller peak $q^{\mathsf{e}}_2$. Furthermore, comparing this to the basis vectors of $Q^\iteridx_r$ (Figure~\ref{fig:run1_q_basis}), we find, as expected, that the differences $q_r^{(i+1)} - q_r^\iteridx$ closely align with the corresponding basis vectors $\tilde{q}_j$ added during the updates.

\subsubsection{Run 2: Diffusion}
We are now considering the situation from Example \ref{ex:default_example} (ii), aiming to reconstruct the stationary diffusion field, for \eqref{eq:diffusion_problem} with $f \equiv 1$. Remark that in Example \ref{ex:default_example} the parameter is assumed to be in $H^2(\Omega)$. However, since we do not want to construct FE spaces in $H^2(\Omega)$, we  instead choose $\Q = H^1(\Omega)$ (cf. \cite{kartmann_adaptive_2023}). The coercivity constant is chosen as $\alpha_{q_h} := \essinf_{\Omega} q_h$. Analogue to \cite{RBL} and \cite{kartmann_adaptive_2023} is the exact parameter $q^\mathsf e$ given as $q^{\mathsf e}(\bm x) = q_\circ(\bm x) + 2 (\chi_{\Omega_1}(\bm x)- \chi_{\Omega_2}(\bm x))$, where  
\begin{align*}
\Omega_1 & = [5/30, 9/30] \times [3/30, 27/30] \\
& \quad\cup \left([9/30, 27/30] \times \left([3/30, 7/30] \cup [23/30, 27/30] \right) \right),\\
\Omega_2 & = \left\{ x \in \Omega\,|\,\|x - (18 / 30, 15/30)^T\|_{\mathbb{R}^2} < 4 / 30\right\}.
\end{align*}
\begin{table}[b]
\small
\setlength{\tabcolsep}{3.7pt}
\begin{tabularx}{\textwidth}{lc|cccccccc}
\toprule
	Algo. & $\epsPOD$ & $L^2$-rel. err. & $H^1$-rel. err. & time [s] & speed up & FOM solves & $n_Q$ & $n_V$ & o. iter \\
\midrule
	FOM & -- & -- & -- & 17577 & -- & 13828 & -- & -- & 26 \\
	TR & 1e-09 & 5.97e-03 & 5.15e-02 & 3141 & 5.60 & 58 & 28 & 295 & 28 \\
	TR & 1e-10 & 5.81e-03 & 4.88e-02 & 3084 & 5.70 & 60 & 29 & 323 & 29 \\
	TR & 1e-11 & 5.34e-03 & 4.92e-02 & 3539 & 4.97 & 60 & 29 & 358 & 29 \\
	TR & 1e-12 & 5.49e-03 & 4.79e-02 & 3253 & 5.40 & 60 & 29 & 359 & 29 \\
	TR & 1e-13 & 6.39e-03 & 5.54e-02 & 3333 & 5.27 & 58 & 28 & 347 & 28 \\
	TR & 1e-14 & 6.29e-03 & 5.37e-02 & 3360 & 5.23 & 58 & 28 & 361 & 28 \\
\bottomrule
\end{tabularx}
\caption{Run 2: Comparison of FOM- and TR-IRGNM for different tolerances $\epsPOD$, analogue to Table~\ref{tab:Run1_comparison}.}
\label{tab:Run2_comparison}
\end{table}
Table~\ref{tab:Run2_comparison} shows key values for FOM- and TR-IRGNM, analogous to Table~\ref{tab:Run1_comparison}. Many of the observations made for Run~1 also apply here. However, the speed-ups achieved by TR-IRGNM are less pronounced, ranging from $4.97$ to $5.70$. As before, the limited number of available parameter snapshots is a constraining factor, necessitating more frequent updates (up to 29) of the reduced spaces. This negatively impacts the overall speed-up in several ways.

First, at each iteration, both reduced spaces must be updated, which involves computationally costly tasks: evaluating the FOM, performing POD, and re-orthogonalizing. Second, inadequate local approximations require more frequent tuning of the parameter $\alpha^{(i,l)}$ to satisfy condition~\eqref{eq:choice_of_alpha_ROM}, leading to additional overhead and often resulting in weaker regularization. Third, the frequent enrichments increase the dimensions of the reduced spaces, making the evaluation of the reduced-order models more expensive.

Regarding this last point, it is important to note that the reduced spaces are enriched iteratively. As a result, their dimensions may over time grow larger than necessary to provide accurate (local) approximations at the current iterate. This occurs because the basis may retain vectors corresponding to earlier local solutions that no longer contribute significantly. Consequently, the enrichment strategy can lead to an inflated reduced basis by accumulating (locally) obsolete basis vectors.
\begin{figure}[t]
    \centering
    \includegraphics[width=\textwidth]{./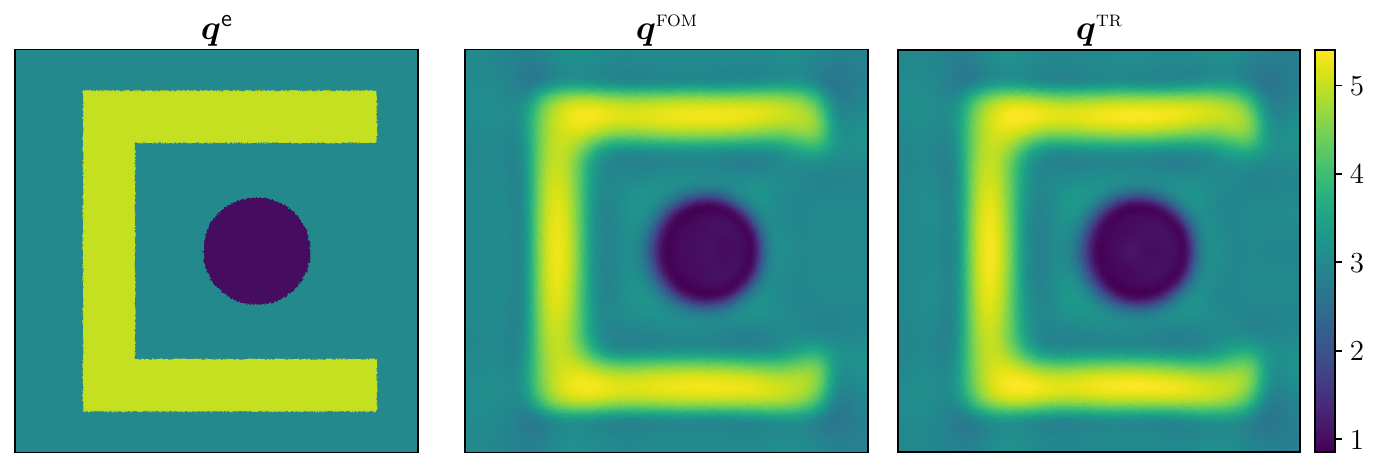} 
	\caption{
		Run 2: Exact parameter $\bm{q}^\mathsf{e}$ (left) and parameters reconstructed by FOM-IRGNM, $\bm{q}^\text{\footnotesize FOM}$ (middle) and by TR-IRGNM, $\bm{q}^\text{\footnotesize TR}$ (right), for $\epsPOD = 10^{-12}$.}
    \label{fig:run_2_reconst_param}
\end{figure}
Furthermore, we observe that the reconstructed parameters exhibit smaller relative errors compared to those in Run~1. This improvement results from the larger reduced parameter basis constructed in this run, which enhances the representational capacity of the reduced model. This effect becomes even more significant in problems involving time-dependent parameter fields (see below). Despite this improvement, for all reconstructed diffusion fields, including those obtained with FOM-IRGNM, we see that the discontinuities present in the exact parameter $q^\mathsf{e}$ are noticeably smoothed out in the reconstructions. 
\subsection{Time-varying parameter fields}
\subsubsection{Run 3: Reaction}
Let now consider the situation from Run 1 again (Example \ref{ex:default_example} (i)), where the parameter field varies in time; $\Q = L^2(0,1; L^2(\Omega))$. Analogue to Run 1, the (time-dependent) coercivity constants $\alpha_{q^k_h}$ can for all $k \in \mathbb{K}$ and $q_h \in Q_h^K$ be set to $1$. The exact parameter $q^\mathsf e$ consists of the background $q_\circ$ and a time-varying part, given by two Gaussian distributions from Run 1 where the peak-size is controlled by a sinus function, i.e., 
\begin{equation*}
q^{\mathsf e}(t, \bm x) = q_\circ(\bm x) + \sin (\pi t) \left(q^{\mathsf e}_1(\bm x) + q^{\mathsf e}_2(\bm x) \right).
\end{equation*}
\begin{table}[ht]
\small
\setlength{\tabcolsep}{3.7pt}
\begin{tabularx}{\textwidth}{lc|cccccccc}
\toprule
	Algo. & $\epsilon_{\text{POD}}$ & $L^2$-rel. err. & $H^1$-rel. err. & time [s] & speed up & FOM solves & $n_Q$ & $n_V$ & o. iter \\
\midrule
	FOM & -- & -- & -- & 4922 & -- & 2496 & -- & -- & 12 \\
	TR & 1e-09 & 2.71e-02 & 1.26e-01 & 1013 & 4.86 & 8 & 10 & 49 & 3 \\
	TR & 1e-10 & 2.94e-02 & 1.30e-01 & 900 & 5.47 & 8 & 17 & 60 & 3 \\
	TR & 1e-11 & 1.11e-02 & 5.51e-02 & 1055 & 4.66 & 8 & 20 & 56 & 3 \\
	TR & 1e-12 & 7.90e-03 & 4.07e-02 & 933 & 5.27 & 8 & 28 & 51 & 3 \\
	TR & 1e-13 & 2.18e-03 & 1.32e-02 & 944 & 5.21 & 8 & 35 & 59 & 3 \\
	TR & 1e-14 & 4.44e-03 & 2.25e-02 & 900 & 5.47 & 8 & 102 & 60 & 3 \\
\bottomrule
\end{tabularx}
\caption{
Run 3: Comparison of FOM- and TR-IRGNM for different tolerances $\epsPOD$, analogous to Table~\ref{tab:Run1_comparison}. The relative errors of the parameter field $q^\text{TR} \in Q^K_h$ with respect to $q^\text{FOM} \in Q^K_h$, computed as $\|q^{\text{\footnotesize FOM}} - q^{\text{\footnotesize TR}}\|_{L^2(0,T;L^2(\Omega))} / \|q^{\text{\footnotesize FOM}}\|_{L^2(0,T;L^2(\Omega))}$ and analogously for $H^1$, using standard FE norms.}
\label{tab:Run3_comparison}
\end{table}
As shown in Table~\ref{tab:Run3_comparison}, in contrast to the case with time-invariant parameter fields, a clear dependence of both the final dimension of $Q^\iteridx_r$ and the relative errors on the POD tolerance $\epsPOD$ is observed. Specifically, decreasing the tolerance leads to smaller relative errors and a higher final dimension of the reduced parameter space. A direct comparison reveals that the relative errors are consistently smaller than those observed in Run~1. Notably, for $\epsPOD = 10^{-9}$ are the relative errors already smaller than those in Run~1 for $\epsPOD = 10^{-14}$. Moreover, while the number of outer iterations for FOM-IRGNM remains comparable to that of Run~1 ($12$ vs.\ $13$), it is halved in the runs using TR-IRGNM ($3$ vs.\ $6$). This behavior can be attributed to the fact that snapshot availability for the parameter space is no longer a limiting factor. The gradients $\nabla J_h(q^\iteridx_r) \in V^K_h$ are time dependent, and thus each enrichment provides $K = 50$ snapshots, resulting in more expressive reduced parameter spaces compared to Run~1. As a consequence, is the number of required updates reduced, and the quality of the approximations in $Q^\iteridx_r$ is improved. The dimension of the reduced state space remains relatively stable (between $49$ and $60$), exhibiting behavior consistent with Runs~1 and~2.

\begin{figure}[b]
    \centering
    \includegraphics[width=\textwidth]{./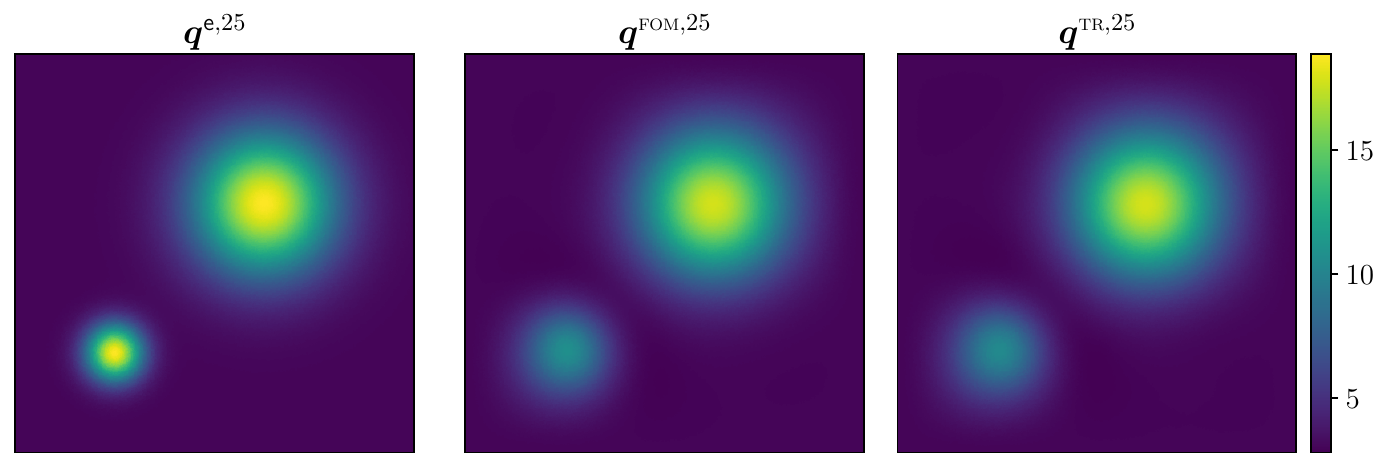} 
	\caption{Run 3: Exact parameter $\bm{q}^{\mathsf{e},25}$ (left) and parameters reconstructed by FOM-IRGNM, $\bm{q}^{\text{\footnotesize FOM},25}$ (middle) and by TR-IRGNM, $\bm{q}^{\text{\footnotesize TR},25}$ (right), for $\epsPOD = 10^{-12}$, at time step $k = 25$.}
    \label{fig:run_3_reconst_param}
\end{figure}

\begin{figure}[t]
    \centering
    \includegraphics[width=0.9\textwidth]{./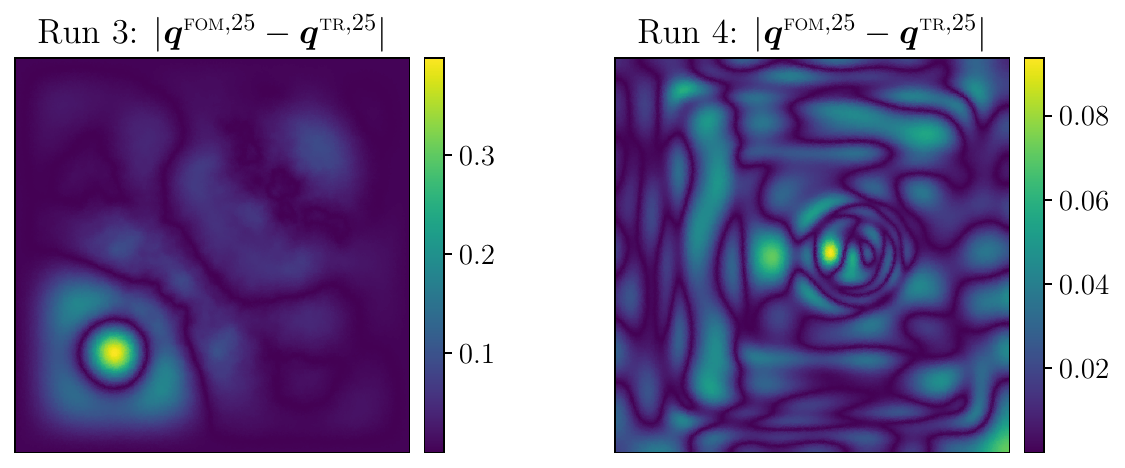} 
    \caption{Absolute difference of the reconstructed parameters obtained by FOM- and TR-TRGNM for Run 3 (left) and Run 4 (right), with $\epsPOD = 10^{-12}$}
    \label{fig:diffs_run_3_and_4}
\end{figure}

Despite the reduced number of outer iterations, is the overall speed-up achieved in Run~3 less pronounced than in Run~1. This is primarily due to the significantly higher computational cost of evaluating the full-order state and adjoint solutions for $q_h \in Q_h^K$ in the enrichment of the reduced spaces. In the time-dependent setting, this requires reassembling and inverting the matrix $\bm{A}_h(q_h^k)$ at every time step, which is substantially more expensive than the corresponding operations for the time-independent case of Run~1. In contrast, solving the reduced problem~\eqref{prob:lin_prob_rom} does not incur a proportionally higher cost. This is because the system matrices involved there are cached, when the linearized problems~\eqref{eq:state_prob_ROM} and~\eqref{eq:adj_prob_ROM} are solved for updated directions but fixed parameters $q^{(i,l)}_r$. Nevertheless, the additional computational savings from the reduced-order model are not sufficient to offset the added expense during the enrichment. Additionally, for the same reason, becomes the evaluation of the error estimator more computationally demanding, which further diminishes the overall speed advantage of the TR-IRGNM in this setting.

\begin{figure}[b] 
    \centering
    \includegraphics[width=0.95\textwidth]{./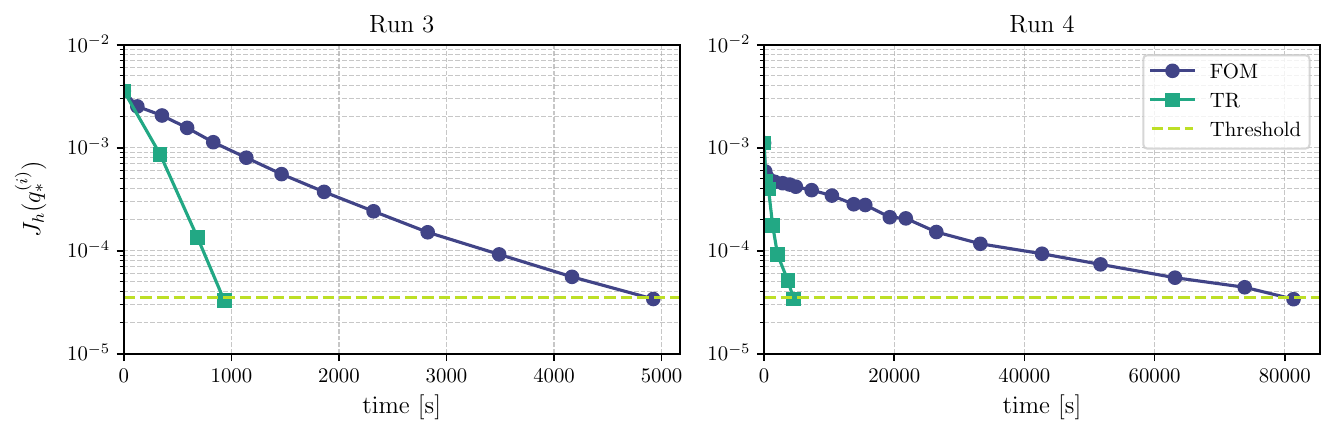} 
    \caption{
		Objective $J_h(q^\iteridx_\ast)$ plotted against the total computation, for Run 3 (left) and Run 4 (right). In the case of FOM-IRGNM, we set $q_\ast^\iteridx = q_h^\iteridx$ and for TR-IRGNM, $q_\ast^\iteridx = q_r^\iteridx$. The data shown is for the experiment with $\epsPOD = 10^{-12}$.
    }
    \label{fig:decays_run_3_and_4}
\end{figure}

\subsubsection{Run 4: Diffusion}
Similar to Run 3, we are looking to reconstruct the time-varying diffusion field in Example \ref{ex:default_example} (ii) for $f \equiv 1$. Let therefore, as in Run 2, $\Q = L^2(0,1; H^1(\Omega))$ and the coercivity constants $\alpha_{q^k_h}$ be set as $\alpha_{q^k_h} := \essinf_{\Omega} q^k_h$ for all $k \in \mathbb{K}$ and $q_h \in Q_h^k$. The exact parameter is constructed as in Run 3, keeping the background stationary and changing the remaining part, i.e. 
\begin{equation*}
q^{\mathsf e}(t, \bm x) = q_\circ(\bm x) + 2 \sin(\pi t) (\chi_{\Omega_1}(\bm x) - \chi_{\Omega_2}(\bm x)).
\end{equation*}
\begin{table}[ht]
\small
\setlength{\tabcolsep}{3.7pt}
\begin{tabularx}{\textwidth}{lc|cccccccc}
\toprule
	Algo. & $\epsilon_{\text{POD}}$ & $L^2$-rel. err. & $H^1$-rel. err. & time [s] & speed up & FOM solves & $n_Q$ & $n_V$ & o. iter \\
\midrule
	FOM & -- & -- & -- & 81348 & -- & 43695 & -- & -- & 21 \\
	TR & 1e-09 & 1.24e-02 & 1.12e-01 & 15765 & 5.16 & 32 & 28 & 492 & 15 \\
	TR & 1e-10 & 1.26e-02 & 1.11e-01 & 10384 & 7.83 & 24 & 40 & 392 & 11 \\
	TR & 1e-11 & 9.80e-03 & 8.82e-02 & 5381 & 15.12 & 16 & 53 & 234 & 7 \\
	TR & 1e-12 & 4.30e-03 & 4.30e-02 & 4548 & 17.89 & 14 & 68 & 197 & 6 \\
	TR & 1e-13 & 2.36e-03 & 2.77e-02 & 4901 & 16.60 & 14 & 94 & 220 & 6 \\
	TR & 1e-14 & 1.36e-03 & 1.52e-02 & 5047 & 16.12 & 14 & 158 & 215 & 6 \\
\bottomrule
\end{tabularx}
\caption{Run 4: Comparison of FOM- and TR-IRGNM for different tolerances $\epsPOD$, analogue to Table~\ref{tab:Run3_comparison}.}
\label{tab:Run4_comparison}
\end{table}
As in Run~3, Table~\ref{tab:Run4_comparison} reveals a clear dependency of the dimension of the reduced parameter space $Q_r^\iteridx$ and the relative reconstruction errors on the POD tolerance~$\epsPOD$. In addition, we observe a dependency of the final dimension of the reduced state space $V_r^\iteridx$, the achieved speed-up, and the number of outer iterations on~$\epsPOD$ as well. Notably, the number of outer iterations decreases as the POD tolerance becomes more restrictive. At the same time, the dimension of $V_r^\iteridx$ is significantly reduced with decreasing~$\epsPOD$, dropping from $492$ to $197$ up to $\epsPOD = 10^{-12}$, but shows a slight increase for even smaller tolerances. This behaviour can be attributed to the limited accuracy of local approximations for higher POD tolerances. In such cases, smaller trust regions or the inability to identify a sufficiently accurate step size $\alpha$ lead to more frequent enrichments of the reduced basis. Consequently, the reduced state space accumulates more snapshots, resulting in a larger overall dimension. These phenomena also explain the relatively modest speed-ups observed for high POD tolerances. 

As the tolerance becomes more restrictive (i.e., $\epsPOD \leq 10^{-11}$), the speed-ups increase and even exceed those observed in Run~2. As in Run~3, the time-dependent setting increases the computational cost of both basis enrichment and error estimation. However, in contrast to Run~2, the cost of the full-order IRGNM becomes also significantly higher in this setting. The primary reason is that solving the linearized problems~\eqref{prob:lin_prob_rom} requires considerably more iterations to converge than in Run~2, explaining the increased speed-ups.

\begin{figure}[ht] 
    \centering
    \includegraphics[width=\textwidth]{./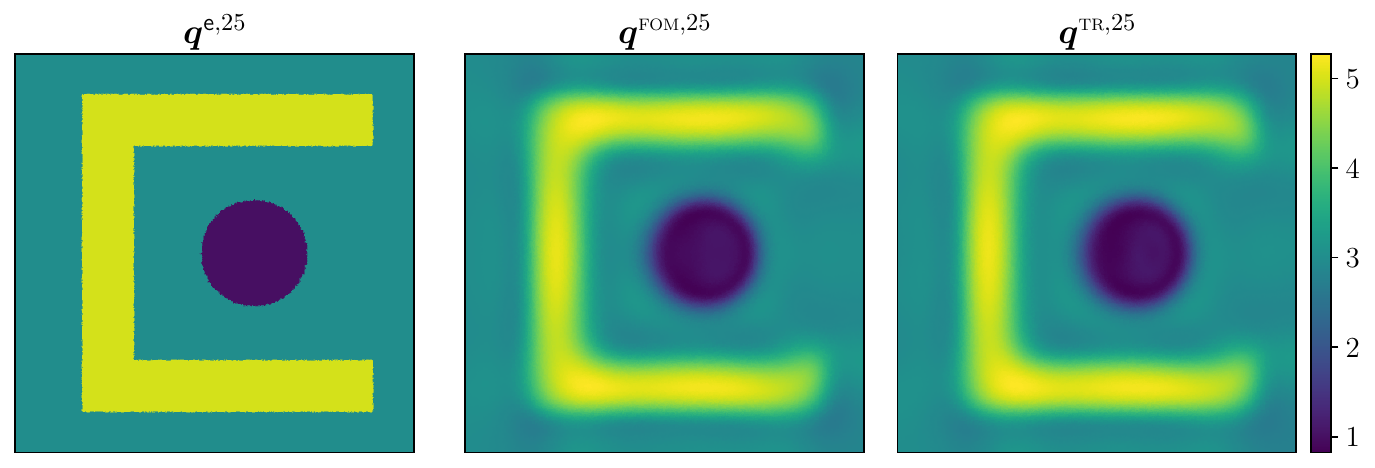} 
	\caption{Run 4: Exact parameter $\bm{q}^{\mathsf{e}, 25}$,  (left) and parameters reconstructed by FOM-IRGNM, $\bm{q}^{\text{\footnotesize FOM}, 25}$ (middle) and by TR-IRGNM, $\bm{q}^{\text{\footnotesize TR}, 25}$ (right), for $\epsPOD = 10^{-12}$, at time step $k = 25$.}
    \label{fig:run_4_reconst_param}
\end{figure}

\section{Conclusion \& Outlook}
We have presented an extension to the algorithm introduced in~\cite{kartmann_adaptive_2023}, which employs adaptively constructed reduced parameter and state spaces within an IRGNM optimization framework. This extension enables the application to parameter identification problems constrained by parabolic PDEs. The proposed modifications incorporate safeguards ensuring that the reduced spaces are enriched only by the most relevant modes. With this approach a balance between local approximation accuracy and the computational efficiency of the reduced-order models is maintained.

Numerical experiments demonstrate the effectiveness of the proposed algorithm in reconstructing reaction and diffusion fields. These experiments are conducted using parameter spaces with dimensionality equal to that of the finite element discretization of the state space, and they show that the algorithm achieves a substantial reduction in computational time while still maintaining the desired accuracy.

Subject of future research will be the investigation of stability in view of $\delta \to 0$. To this end the IRGNM stopping index, the FE discretization and the MOR dimension have to be coupled to the noise level $\delta$ in an appropriate way to prove convergence for $\delta \to 0$. 



\section*{Statements and declarations}
\bmhead{Funding} Benedikt Klein and Mario Ohlberger acknowledge funding from the Deutsche Forschungsgemeinschaft (DFG, German Research Foundation) under Germany's Excellence Strategy – EXC 2044/1 – 390685587, Mathematics Münster: Dynamics–Geometry–Structure.

\bmhead{Competing interests} The authors have no relevant financial or non-financial interests to disclose.
\bibliography{biblio}


\begin{thebibliography}{65}
\ifx \bisbn   \undefined \def \bisbn  #1{ISBN #1}\fi
\ifx \binits  \undefined \def \binits#1{#1}\fi
\ifx \bauthor  \undefined \def \bauthor#1{#1}\fi
\ifx \batitle  \undefined \def \batitle#1{#1}\fi
\ifx \bjtitle  \undefined \def \bjtitle#1{#1}\fi
\ifx \bvolume  \undefined \def \bvolume#1{\textbf{#1}}\fi
\ifx \byear  \undefined \def \byear#1{#1}\fi
\ifx \bissue  \undefined \def \bissue#1{#1}\fi
\ifx \bfpage  \undefined \def \bfpage#1{#1}\fi
\ifx \blpage  \undefined \def \blpage #1{#1}\fi
\ifx \burl  \undefined \def \burl#1{\textsf{#1}}\fi
\ifx \doiurl  \undefined \def \doiurl#1{\url{https://doi.org/#1}}\fi
\ifx \betal  \undefined \def \betal{\textit{et al.}}\fi
\ifx \binstitute  \undefined \def \binstitute#1{#1}\fi
\ifx \binstitutionaled  \undefined \def \binstitutionaled#1{#1}\fi
\ifx \bctitle  \undefined \def \bctitle#1{#1}\fi
\ifx \beditor  \undefined \def \beditor#1{#1}\fi
\ifx \bpublisher  \undefined \def \bpublisher#1{#1}\fi
\ifx \bbtitle  \undefined \def \bbtitle#1{#1}\fi
\ifx \bedition  \undefined \def \bedition#1{#1}\fi
\ifx \bseriesno  \undefined \def \bseriesno#1{#1}\fi
\ifx \blocation  \undefined \def \blocation#1{#1}\fi
\ifx \bsertitle  \undefined \def \bsertitle#1{#1}\fi
\ifx \bsnm \undefined \def \bsnm#1{#1}\fi
\ifx \bsuffix \undefined \def \bsuffix#1{#1}\fi
\ifx \bparticle \undefined \def \bparticle#1{#1}\fi
\ifx \barticle \undefined \def \barticle#1{#1}\fi
\bibcommenthead
\ifx \bconfdate \undefined \def \bconfdate #1{#1}\fi
\ifx \botherref \undefined \def \botherref #1{#1}\fi
\ifx \url \undefined \def \url#1{\textsf{#1}}\fi
\ifx \bchapter \undefined \def \bchapter#1{#1}\fi
\ifx \bbook \undefined \def \bbook#1{#1}\fi
\ifx \bcomment \undefined \def \bcomment#1{#1}\fi
\ifx \oauthor \undefined \def \oauthor#1{#1}\fi
\ifx \citeauthoryear \undefined \def \citeauthoryear#1{#1}\fi
\ifx \endbibitem  \undefined \def \endbibitem {}\fi
\ifx \bconflocation  \undefined \def \bconflocation#1{#1}\fi
\ifx \arxivurl  \undefined \def \arxivurl#1{\textsf{#1}}\fi
\csname PreBibitemsHook\endcsname

\bibitem[\protect\citeauthoryear{Kaltenbacher et~al.}{2014}]{KaltenbacherMain}
\begin{barticle}
\bauthor{\bsnm{Kaltenbacher}, \binits{B.}},
\bauthor{\bsnm{Kirchner}, \binits{A.}},
\bauthor{\bsnm{Vexler}, \binits{B.}}:
\batitle{Goal oriented adaptivity in the {IRGNM} for parameter identification in {PDE}s: {II}. all-at-once formulations}.
\bjtitle{Inverse Problems}
\bvolume{30}(\bissue{2}),
\bfpage{045002}
(\byear{2014})
\doiurl{10.1088/0266-5611/30/4/045002}
\end{barticle}
\endbibitem

\bibitem[\protect\citeauthoryear{Kirchner}{2014}]{diss_kirchner}
\begin{botherref}
\oauthor{\bsnm{Kirchner}, \binits{A.}}:
Adaptive regularization and discretization for nonlinear inverse problems with {PDE}s.
PhD thesis,
Technische Universität München
(2014)
\end{botherref}
\endbibitem

\bibitem[\protect\citeauthoryear{Kaltenbacher et~al.}{2021a}]{kaltenbacher2021parameter}
\begin{botherref}
\oauthor{\bsnm{Kaltenbacher}, \binits{B.}},
\oauthor{\bsnm{Nguyen}, \binits{T.T.N.}},
\oauthor{\bsnm{Wald}, \binits{A.}},
\oauthor{\bsnm{Schuster}, \binits{T.}}:
{Parameter identification for the Landau--Lifshitz--Gilbert equation in magnetic particle imaging}.
Time-dependent Problems in Imaging and Parameter Identification,
377--412
(2021)
\doiurl{10.1007/978-3-030-57784-1_13}
\end{botherref}
\endbibitem

\bibitem[\protect\citeauthoryear{Kaltenbacher et~al.}{2021b}]{kaltenbacher2021time}
\begin{bbook}
\bauthor{\bsnm{Kaltenbacher}, \binits{B.}},
\bauthor{\bsnm{Schuster}, \binits{T.}},
\bauthor{\bsnm{Wald}, \binits{A.}}:
\bbtitle{{Time-dependent Problems in Imaging and Parameter Identification}}.
\bpublisher{Springer},
\blocation{Heidelberg}
(\byear{2021}).
\doiurl{10.1007/978-3-030-57784-1}
\end{bbook}
\endbibitem

\bibitem[\protect\citeauthoryear{Binder et~al.}{2015}]{binder2015defect}
\begin{barticle}
\bauthor{\bsnm{Binder}, \binits{F.}},
\bauthor{\bsnm{Sch{\"o}pfer}, \binits{F.}},
\bauthor{\bsnm{Schuster}, \binits{T.}}:
\batitle{Defect localization in fibre-reinforced composites by computing external volume forces from surface sensor measurements}.
\bjtitle{Inverse Problems}
\bvolume{31}(\bissue{2}),
\bfpage{025006}
(\byear{2015})
\doiurl{10.1088/0266-5611/31/2/025006}
\end{barticle}
\endbibitem

\bibitem[\protect\citeauthoryear{Klein et~al.}{2021}]{klein2021sequential}
\begin{botherref}
\oauthor{\bsnm{Klein}, \binits{R.}},
\oauthor{\bsnm{Schuster}, \binits{T.}},
\oauthor{\bsnm{Wald}, \binits{A.}}:
Sequential subspace optimization for recovering stored energy functions in hyperelastic materials from time-dependent data.
Time-dependent problems in imaging and parameter identification,
165--190
(2021)
\doiurl{10.1007/978-3-030-57784-1_6}
\end{botherref}
\endbibitem

\bibitem[\protect\citeauthoryear{Seydel and Schuster}{2017a}]{seydel2017linearization}
\begin{barticle}
\bauthor{\bsnm{Seydel}, \binits{J.}},
\bauthor{\bsnm{Schuster}, \binits{T.}}:
\batitle{On the linearization of identifying the stored energy function of a hyperelastic material from full knowledge of the displacement field}.
\bjtitle{Mathematical Methods in the Applied Sciences}
\bvolume{40}(\bissue{1}),
\bfpage{183}--\blpage{204}
(\byear{2017})
\doiurl{10.1002/mma.3979}
\end{barticle}
\endbibitem

\bibitem[\protect\citeauthoryear{Seydel and Schuster}{2017b}]{seydel2017identifying}
\begin{barticle}
\bauthor{\bsnm{Seydel}, \binits{J.}},
\bauthor{\bsnm{Schuster}, \binits{T.}}:
\batitle{Identifying the stored energy of a hyperelastic structure by using an attenuated {L}andweber method}.
\bjtitle{Inverse Problems}
\bvolume{33}(\bissue{12}),
\bfpage{124004}
(\byear{2017})
\doiurl{10.1088/1361-6420/aa8d91}
\end{barticle}
\endbibitem

\bibitem[\protect\citeauthoryear{Lechzeiter and Schlasche}{2017}]{lechleiter2017identifying}
\begin{barticle}
\bauthor{\bsnm{Lechzeiter}, \binits{A.}},
\bauthor{\bsnm{Schlasche}, \binits{J.W.}}:
\batitle{Identifying lam{\'e} parameters from time-dependent elastic wave measurements}.
\bjtitle{Inverse Problems in Science and Engineering}
\bvolume{25}(\bissue{1}),
\bfpage{2}--\blpage{26}
(\byear{2017})
\doiurl{10.1080/17415977.2015.1132713}
\end{barticle}
\endbibitem

\bibitem[\protect\citeauthoryear{Isakov}{2017}]{isakov2017inverse}
\begin{bbook}
\bauthor{\bsnm{Isakov}, \binits{V.}}:
\bbtitle{Inverse Problems for Partial Differential Equations}.
\bsertitle{Applied Mathematical Sciences}.
\bpublisher{Springer},
\blocation{New York}
(\byear{2017}).
\burl{https://books.google.de/books?id=lj02DgAAQBAJ}
\end{bbook}
\endbibitem

\bibitem[\protect\citeauthoryear{Kavian}{2003}]{Kavian+2003+125+162}
\begin{bchapter}
\bauthor{\bsnm{Kavian}, \binits{O.}}:
\bctitle{Lectures on parameter identification}.
In: \beditor{\bsnm{Sonnendrücker}, \binits{E.}} (ed.)
\bbtitle{Three Courses on Partial Differential Equations},
pp. \bfpage{125}--\blpage{162}.
\bpublisher{De Gruyter},
\blocation{Berlin, New York}
(\byear{2003}).
\doiurl{10.1515/9783110200072.125}
\end{bchapter}
\endbibitem

\bibitem[\protect\citeauthoryear{Bakushinsky and Kokurin}{2004}]{BakKok04}
\begin{bbook}
\bauthor{\bsnm{Bakushinsky}, \binits{A.B.}},
\bauthor{\bsnm{Kokurin}, \binits{M.Y.}}:
\bbtitle{Iterative Methods for Approximate Solution of Inverse Problems}.
\bsertitle{Mathematics and Its Applications},
vol. \bseriesno{577}.
\bpublisher{Springer},
\blocation{Dordrecht}
(\byear{2004}).
\doiurl{10.1007/978-1-4020-3122-9}
\end{bbook}
\endbibitem

\bibitem[\protect\citeauthoryear{Hanke et~al.}{1995}]{HaNeSc95}
\begin{barticle}
\bauthor{\bsnm{Hanke}, \binits{M.}},
\bauthor{\bsnm{Neubauer}, \binits{A.}},
\bauthor{\bsnm{Scherzer}, \binits{O.}}:
\batitle{A convergence analysis of the {L}andweber iteration for nonlinear ill-posed problems}.
\bjtitle{Numer.\ Math.}
\bvolume{72},
\bfpage{21}--\blpage{37}
(\byear{1995})
\doiurl{10.1007/s002110050158}
\end{barticle}
\endbibitem

\bibitem[\protect\citeauthoryear{Kaltenbacher et~al.}{2008}]{KaltenbacherNeubauerScherzer+2008}
\begin{bbook}
\bauthor{\bsnm{Kaltenbacher}, \binits{B.}},
\bauthor{\bsnm{Neubauer}, \binits{A.}},
\bauthor{\bsnm{Scherzer}, \binits{O.}}:
\bbtitle{Iterative Regularization Methods for Nonlinear Ill-Posed Problems}.
\bpublisher{De Gruyter},
\blocation{Berlin, New York}
(\byear{2008}).
\doiurl{10.1515/9783110208276}
\end{bbook}
\endbibitem

\bibitem[\protect\citeauthoryear{Kirsch}{1996}]{Kirs96}
\begin{bbook}
\bauthor{\bsnm{Kirsch}, \binits{A.}}:
\bbtitle{An Introduction to the Mathematical Theory of Inverse Problems}.
\bpublisher{Springer},
\blocation{New York}
(\byear{1996}).
\doiurl{10.1007/978-3-030-63343-1}
\end{bbook}
\endbibitem

\bibitem[\protect\citeauthoryear{Burger et~al.}{2024}]{burger2024ill}
\begin{barticle}
\bauthor{\bsnm{Burger}, \binits{M.}},
\bauthor{\bsnm{Schuster}, \binits{T.}},
\bauthor{\bsnm{Wald}, \binits{A.}}:
\batitle{Ill-posedness of time-dependent inverse problems in lebesgue-bochner spaces}.
\bjtitle{Inverse Problems}
\bvolume{40}(\bissue{8}),
\bfpage{085008}
(\byear{2024})
\doiurl{10.1088/1361-6420/ad5a35}
\end{barticle}
\endbibitem

\bibitem[\protect\citeauthoryear{Tikhonov and Arsenin}{1977}]{TikArs77}
\begin{bbook}
\bauthor{\bsnm{Tikhonov}, \binits{A.N.}},
\bauthor{\bsnm{Arsenin}, \binits{V.Y.}}:
\bbtitle{Solutions of Ill-Posed Problems}.
\bpublisher{John Wiley \& Sons},
\blocation{Washington, D.C.}
(\byear{1977}).
\bcomment{Translation editor: Fritz John}
\end{bbook}
\endbibitem

\bibitem[\protect\citeauthoryear{Groetsch}{1984}]{Groe84}
\begin{bbook}
\bauthor{\bsnm{Groetsch}, \binits{C.W.}}:
\bbtitle{The Theory of Tikhonov Regularization for Fredholm Equations of the First Kind}.
\bpublisher{Pitman},
\blocation{Boston}
(\byear{1984})
\end{bbook}
\endbibitem

\bibitem[\protect\citeauthoryear{Bakushinskiĭ}{1992}]{MR1185952}
\begin{barticle}
\bauthor{\bsnm{Bakushinskiĭ}, \binits{A.B.}}:
\batitle{On a convergence problem of the iterative-regularized {G}auss-{N}ewton method}.
\bjtitle{Zh. Vychisl. Mat. i Mat. Fiz.}
\bvolume{32}(\bissue{9}),
\bfpage{1503}--\blpage{1509}
(\byear{1992})
\end{barticle}
\endbibitem

\bibitem[\protect\citeauthoryear{Hohage}{2001}]{MR1872920}
\begin{barticle}
\bauthor{\bsnm{Hohage}, \binits{T.}}:
\batitle{On the numerical solution of a three-dimensional inverse medium scattering problem}.
\bjtitle{Inverse Problems}
\bvolume{17}(\bissue{6}),
\bfpage{1743}--\blpage{1763}
(\byear{2001})
\doiurl{10.1088/0266-5611/17/6/314}
\end{barticle}
\endbibitem

\bibitem[\protect\citeauthoryear{Langer and Hohage}{2007}]{MR2337589}
\begin{barticle}
\bauthor{\bsnm{Langer}, \binits{S.}},
\bauthor{\bsnm{Hohage}, \binits{T.}}:
\batitle{Convergence analysis of an inexact iteratively regularized {G}auss-{N}ewton method under general source conditions}.
\bjtitle{J. Inverse Ill-Posed Probl.}
\bvolume{15}(\bissue{3}),
\bfpage{311}--\blpage{327}
(\byear{2007})
\doiurl{10.1515/jiip.2007.017}
\end{barticle}
\endbibitem

\bibitem[\protect\citeauthoryear{St\"uck et~al.}{2012}]{SBH12}
\begin{barticle}
\bauthor{\bsnm{St\"uck}, \binits{R.}},
\bauthor{\bsnm{Burger}, \binits{M.}},
\bauthor{\bsnm{Hohage}, \binits{T.}}:
\batitle{The iteratively regularized {G}auss–{N}ewton method with convex constraints and applications in 4pi microscopy}.
\bjtitle{Inverse Problems}
\bvolume{28}(\bissue{1}),
\bfpage{015012}
(\byear{2012})
\doiurl{10.1088/0266-5611/28/1/015012}
\end{barticle}
\endbibitem

\bibitem[\protect\citeauthoryear{Kaltenbacher and Previatti~de Souza}{2018}]{KP18}
\begin{barticle}
\bauthor{\bsnm{Kaltenbacher}, \binits{B.}},
\bauthor{\bsnm{Souza}, \binits{M.L.}}:
\batitle{Convergence and adaptive discretization of the {IRGNM} {T}ikhonov and the {IRGNM} {I}vanov method under a tangential cone condition in {B}anach space}.
\bjtitle{Numerische Mathematik}
\bvolume{140},
\bfpage{449}--\blpage{478}
(\byear{2018})
\doiurl{10.1007/s00211-018-0971-5}
\end{barticle}
\endbibitem

\bibitem[\protect\citeauthoryear{Kaltenbacher et~al.}{2014}]{KKV14}
\begin{barticle}
\bauthor{\bsnm{Kaltenbacher}, \binits{B.}},
\bauthor{\bsnm{Kirchner}, \binits{A.}},
\bauthor{\bsnm{Veljovi{\'{c}}}, \binits{S.}}:
\batitle{Goal oriented adaptivity in the {IRGNM} for parameter identification in {PDE}s: {I}. reduced formulation}.
\bjtitle{Inverse Problems}
\bvolume{30}(\bissue{4}),
\bfpage{045001}
(\byear{2014})
\doiurl{10.1088/0266-5611/30/4/045001}
\end{barticle}
\endbibitem

\bibitem[\protect\citeauthoryear{Haasdonk}{2017}]{Haasdonk2017Chapter2R}
\begin{bbook}
\bauthor{\bsnm{Haasdonk}, \binits{B.}}:
\bbtitle{Chapter 2: Reduced Basis Methods for Parametrized PDEs—A Tutorial Introduction for Stationary and Instationary Problems},
pp. \bfpage{65}--\blpage{136}.
\bpublisher{Society for Industrial and Applied Mathematics},
\blocation{Philadelphia, PA}
(\byear{2017}).
\doiurl{10.1137/1.9781611974829.ch2}
\end{bbook}
\endbibitem

\bibitem[\protect\citeauthoryear{Yue and Meerbergen}{2013}]{YueMeerbergen2013}
\begin{barticle}
\bauthor{\bsnm{Yue}, \binits{Y.}},
\bauthor{\bsnm{Meerbergen}, \binits{K.}}:
\batitle{Accelerating optimization of parametric linear systems by model order reduction}.
\bjtitle{SIAM J. Optim.}
\bvolume{23}(\bissue{2}),
\bfpage{1344}--\blpage{1370}
(\byear{2013})
\doiurl{10.1137/120869171}
\end{barticle}
\endbibitem

\bibitem[\protect\citeauthoryear{Banholzer et~al.}{2024}]{BBMV24}
\begin{barticle}
\bauthor{\bsnm{Banholzer}, \binits{S.}},
\bauthor{\bsnm{Beermann}, \binits{D.}},
\bauthor{\bsnm{Mechelli}, \binits{L.}},
\bauthor{\bsnm{Volkwein}, \binits{S.}}:
\batitle{{POD} suboptimal control of evolution problems: theory and applications ({L}ecture notes)}.
\bjtitle{arXiv}
(\byear{2024})
\doiurl{10.48550/arXiv.2404.07015}
\end{barticle}
\endbibitem

\bibitem[\protect\citeauthoryear{Keil et~al.}{2021}]{Keil2021nonco-54293}
\begin{barticle}
\bauthor{\bsnm{Keil}, \binits{T.}},
\bauthor{\bsnm{Mechelli}, \binits{L.}},
\bauthor{\bsnm{Ohlberger}, \binits{M.}},
\bauthor{\bsnm{Schindler}, \binits{F.}},
\bauthor{\bsnm{Volkwein}, \binits{S.}}:
\batitle{A non-conforming dual approach for adaptive trust-region reduced basis approximation of {PDE}-constrained parameter optimization}.
\bjtitle{ESAIM: Mathematical Modelling and Numerical Analysis}
\bvolume{55}(\bissue{3}),
\bfpage{1239}--\blpage{1269}
(\byear{2021})
\doiurl{10.1051/m2an/2021019}
\end{barticle}
\endbibitem

\bibitem[\protect\citeauthoryear{Keil and Ohlberger}{2024}]{keil2024relaxed}
\begin{barticle}
\bauthor{\bsnm{Keil}, \binits{T.}},
\bauthor{\bsnm{Ohlberger}, \binits{M.}}:
\batitle{A relaxed localized trust-region reduced basis approach for optimization of multiscale problems}.
\bjtitle{ESAIM: Mathematical Modelling and Numerical Analysis}
\bvolume{58}(\bissue{1}),
\bfpage{79}--\blpage{105}
(\byear{2024})
\doiurl{10.1051/m2an/2023089}
\end{barticle}
\endbibitem

\bibitem[\protect\citeauthoryear{Ghattas and Willcox}{2021}]{GhattasWillcox2021}
\begin{barticle}
\bauthor{\bsnm{Ghattas}, \binits{O.}},
\bauthor{\bsnm{Willcox}, \binits{K.}}:
\batitle{Learning physics-based models from data: perspectives from inverse problems and model reduction}.
\bjtitle{Acta Numer.}
\bvolume{30},
\bfpage{445}--\blpage{554}
(\byear{2021})
\doiurl{10.1017/S0962492921000064}
\end{barticle}
\endbibitem

\bibitem[\protect\citeauthoryear{Wald and Schuster}{2017}]{2_69305}
\begin{botherref}
\oauthor{\bsnm{Wald}, \binits{A.}},
\oauthor{\bsnm{Schuster}, \binits{T.}}:
Sequential subspace optimization for nonlinear inverse problems
(2017)
\doiurl{10.1515/jiip-2016-0014}
\end{botherref}
\endbibitem

\bibitem[\protect\citeauthoryear{Wald}{2018}]{Wald2018}
\begin{barticle}
\bauthor{\bsnm{Wald}, \binits{A.}}:
\batitle{A fast subspace optimization method for nonlinear inverse problems in {B}anach spaces with an application in parameter identification}.
\bjtitle{Inverse Problems}
\bvolume{34}(\bissue{8}),
\bfpage{085008}--\blpage{27}
(\byear{2018})
\doiurl{10.1088/1361-6420/aac8f3}
\end{barticle}
\endbibitem

\bibitem[\protect\citeauthoryear{Garmatter et~al.}{2016}]{RBL}
\begin{barticle}
\bauthor{\bsnm{Garmatter}, \binits{D.}},
\bauthor{\bsnm{Haasdonk}, \binits{B.}},
\bauthor{\bsnm{Harrach}, \binits{B.}}:
\batitle{A reduced basis {L}andweber method for nonlinear inverse problems}.
\bjtitle{Inverse Problems}
\bvolume{32}(\bissue{3}),
\bfpage{035001}
(\byear{2016})
\doiurl{10.1088/0266-5611/32/3/035001}
\end{barticle}
\endbibitem

\bibitem[\protect\citeauthoryear{Benner et~al.}{2017a}]{bennerohlberger}
\begin{bbook}
\bauthor{\bsnm{Benner}, \binits{P.}},
\bauthor{\bsnm{Ohlberger}, \binits{M.}},
\bauthor{\bsnm{Cohen}, \binits{A.}},
\bauthor{\bsnm{Willcox}, \binits{K.}}:
\bbtitle{Model Reduction and Approximation}.
\bpublisher{Society for Industrial and Applied Mathematics},
\blocation{Philadelphia, PA}
(\byear{2017}).
\doiurl{10.1137/1.9781611974829}
\end{bbook}
\endbibitem

\bibitem[\protect\citeauthoryear{Benner et~al.}{2017b}]{MR3701994}
\begin{bbook}
\beditor{\bsnm{Benner}, \binits{P.}},
\beditor{\bsnm{Ohlberger}, \binits{M.}},
\beditor{\bsnm{Patera}, \binits{A.}},
\beditor{\bsnm{Rozza}, \binits{G.}},
\beditor{\bsnm{Urban}, \binits{K.}} (eds.):
\bbtitle{Model Reduction of Parametrized Systems}.
\bsertitle{MS\&A. Modeling, Simulation and Applications},
vol. \bseriesno{17},
p. \bfpage{504}.
\bpublisher{Springer},
\blocation{Cham, Germany}
(\byear{2017}).
\doiurl{10.1007/978-3-319-58786-8} .
\bcomment{Selected papers from the 3rd MoRePaS Conference held at the International School for Advanced Studies (SISSA), Trieste, October 13--16, 2015}
\end{bbook}
\endbibitem

\bibitem[\protect\citeauthoryear{Kammann et~al.}{2013}]{KTV13}
\begin{barticle}
\bauthor{\bsnm{Kammann}, \binits{E.}},
\bauthor{\bsnm{Tr\"oltzsch}, \binits{F.}},
\bauthor{\bsnm{Volkwein}, \binits{S.}}:
\batitle{A posteriori error estimation for semilinear parabolic optimal control problems with application to model reduction by {POD}}.
\bjtitle{ESAIM: M2AN}
\bvolume{47}(\bissue{2}),
\bfpage{555}--\blpage{581}
(\byear{2013})
\doiurl{10.1051/m2an/2012037}
\end{barticle}
\endbibitem

\bibitem[\protect\citeauthoryear{Grepl and K\"archer}{2011}]{GK2011}
\begin{barticle}
\bauthor{\bsnm{Grepl}, \binits{M.A.}},
\bauthor{\bsnm{K\"archer}, \binits{M.}}:
\batitle{Reduced basis a posteriori error bounds for parametrized linear-quadratic elliptic optimal control problems}.
\bjtitle{C. R. Math. Acad. Sci. Paris}
\bvolume{349}(\bissue{15-16}),
\bfpage{873}--\blpage{877}
(\byear{2011})
\doiurl{10.1016/j.crma.2011.07.010}
\end{barticle}
\endbibitem

\bibitem[\protect\citeauthoryear{Negri et~al.}{2013}]{NRMQ2013}
\begin{barticle}
\bauthor{\bsnm{Negri}, \binits{F.}},
\bauthor{\bsnm{Rozza}, \binits{G.}},
\bauthor{\bsnm{Manzoni}, \binits{A.}},
\bauthor{\bsnm{Quateroni}, \binits{A.}}:
\batitle{Reduced basis method for parametrized elliptic optimal control problems}.
\bjtitle{SIAM J. Sci. Comput.}
\bvolume{35}(\bissue{5}),
\bfpage{2316}--\blpage{2340}
(\byear{2013})
\doiurl{10.1137/120894737}
\end{barticle}
\endbibitem

\bibitem[\protect\citeauthoryear{Hinze}{[2023] \copyright 2023}]{MR4628189}
\begin{bchapter}
\bauthor{\bsnm{Hinze}, \binits{M.}}:
\bctitle{Model order reduction for optimal control problems}.
In: \bbtitle{Model Order Reduction and Applications}.
\bsertitle{Lecture Notes in Math.},
vol. \bseriesno{2328},
pp. \bfpage{125}--\blpage{199}.
\bpublisher{Springer},
\blocation{Cham, Germany}
(\byear{[2023] \copyright 2023}).
\doiurl{10.1007/978-3-031-29563-8\_3}
\end{bchapter}
\endbibitem

\bibitem[\protect\citeauthoryear{Hu et~al.}{2025}]{MR4902803}
\begin{barticle}
\bauthor{\bsnm{Hu}, \binits{X.}},
\bauthor{\bsnm{Zhang}, \binits{Y.}},
\bauthor{\bsnm{Zhu}, \binits{S.}}:
\batitle{Efficient identification of geometric inverse sources of parabolic problems by model order reduction}.
\bjtitle{Inverse Problems}
\bvolume{41}(\bissue{5}),
\bfpage{055015}--\blpage{24}
(\byear{2025})
\doiurl{10.1088/1361-6420/add17d}
\end{barticle}
\endbibitem

\bibitem[\protect\citeauthoryear{Haasdonk and Ohlberger}{2008}]{Haasdonk2008}
\begin{barticle}
\bauthor{\bsnm{Haasdonk}, \binits{B.}},
\bauthor{\bsnm{Ohlberger}, \binits{M.}}:
\batitle{Reduced basis method for finite volume approximations of parametrized linear evolution equations}.
\bjtitle{Mathematical Modelling and Numerical Analysis}
\bvolume{42}(\bissue{2}),
\bfpage{277}--\blpage{302}
(\byear{2008})
\doiurl{10.1051/m2an:2008001}
\end{barticle}
\endbibitem

\bibitem[\protect\citeauthoryear{Haasdonk}{2013}]{Haasdonk13}
\begin{barticle}
\bauthor{\bsnm{Haasdonk}, \binits{B.}}:
\batitle{Convergence rates of the {POD}-greedy method}.
\bjtitle{ESAIM Math. Model. Numer. Anal.}
\bvolume{47}(\bissue{3}),
\bfpage{859}--\blpage{873}
(\byear{2013})
\doiurl{10.1051/m2an/2012045}
\end{barticle}
\endbibitem

\bibitem[\protect\citeauthoryear{Himpe et~al.}{2018}]{himpe2018hierarchical}
\begin{barticle}
\bauthor{\bsnm{Himpe}, \binits{C.}},
\bauthor{\bsnm{Leibner}, \binits{T.}},
\bauthor{\bsnm{Rave}, \binits{S.}}:
\batitle{Hierarchical approximate proper orthogonal decomposition}.
\bjtitle{SIAM Journal on Scientific Computing}
\bvolume{40}(\bissue{5}),
\bfpage{3267}--\blpage{3292}
(\byear{2018})
\doiurl{10.1137/16M1085413}
\end{barticle}
\endbibitem

\bibitem[\protect\citeauthoryear{Kartmann et~al.}{2024}]{kartmann_adaptive_2023}
\begin{botherref}
\oauthor{\bsnm{Kartmann}, \binits{M.}},
\oauthor{\bsnm{Keil}, \binits{T.}},
\oauthor{\bsnm{Ohlberger}, \binits{M.}},
\oauthor{\bsnm{Volkwein}, \binits{S.}},
\oauthor{\bsnm{Kaltenbacher}, \binits{B.}}:
Adaptive reduced basis trust region methods for parameter identification problems.
Computational Science and Engineering
\textbf{1}(3)
(2024)
\doiurl{10.1007/s44207-024-00002-z}
\end{botherref}
\endbibitem

\bibitem[\protect\citeauthoryear{Lieberman et~al.}{2010}]{Liebermanetal2016}
\begin{barticle}
\bauthor{\bsnm{Lieberman}, \binits{C.}},
\bauthor{\bsnm{Willcox}, \binits{K.}},
\bauthor{\bsnm{Ghattas}, \binits{O.}}:
\batitle{Parameter and state model reduction for large-scale statistical inverse problems}.
\bjtitle{SIAM J. Sci. Comput.}
\bvolume{32}(\bissue{5}),
\bfpage{2523}--\blpage{2542}
(\byear{2010})
\doiurl{10.1137/090775622}
\end{barticle}
\endbibitem

\bibitem[\protect\citeauthoryear{Himpe and Ohlberger}{2014}]{HimpeOhlberger2014}
\begin{botherref}
\oauthor{\bsnm{Himpe}, \binits{C.}},
\oauthor{\bsnm{Ohlberger}, \binits{M.}}:
Cross-gramian-based combined state and parameter reduction for large-scale control systems.
Mathematical Problems in Engineering,
843869--13
(2014)
\doiurl{10.1155/2014/843869}
\end{botherref}
\endbibitem

\bibitem[\protect\citeauthoryear{Himpe and Ohlberger}{2015}]{HimpeOhlberger2015}
\begin{barticle}
\bauthor{\bsnm{Himpe}, \binits{C.}},
\bauthor{\bsnm{Ohlberger}, \binits{M.}}:
\batitle{Data-driven combined state and parameter reduction for inverse problems}.
\bjtitle{Adv. Comput. Math.}
\bvolume{41}(\bissue{5}),
\bfpage{1343}--\blpage{1364}
(\byear{2015})
\doiurl{10.1007/s10444-015-9420-5}
\end{barticle}
\endbibitem

\bibitem[\protect\citeauthoryear{Arian et~al.}{2000}]{AFS00}
\begin{botherref}
\oauthor{\bsnm{Arian}, \binits{E.}},
\oauthor{\bsnm{Fahl}, \binits{M.}},
\oauthor{\bsnm{Sachs}, \binits{E.W.}}:
Trust-region proper orthogonal decomposition for flow control.
Technical report,
Technical Report 2000-25, ICASE
(2000).
\url{https://apps.dtic.mil/sti/pdfs/ADA377382.pdf}
\end{botherref}
\endbibitem

\bibitem[\protect\citeauthoryear{Bergmann and Cordier}{2008}]{BC08}
\begin{barticle}
\bauthor{\bsnm{Bergmann}, \binits{M.}},
\bauthor{\bsnm{Cordier}, \binits{L.}}:
\batitle{Optimal control of the cylinder wake in the laminar regime by trust-region methods and {POD} reduced-order models}.
\bjtitle{Journal of Computational Physics}
\bvolume{227},
\bfpage{7813}--\blpage{7840}
(\byear{2008})
\doiurl{10.1016/j.jcp.2008.04.034}
\end{barticle}
\endbibitem

\bibitem[\protect\citeauthoryear{Agarwal and Biegler}{2013}]{AB13}
\begin{barticle}
\bauthor{\bsnm{Agarwal}, \binits{A.}},
\bauthor{\bsnm{Biegler}, \binits{L.T.}}:
\batitle{A trust-region framework for constrained optimization using reduced order modeling}.
\bjtitle{Optimization and Engineering}
\bvolume{14},
\bfpage{3}--\blpage{35}
(\byear{2013})
\doiurl{10.1007/s11081-011-9164-0}
\end{barticle}
\endbibitem

\bibitem[\protect\citeauthoryear{Alff et~al.}{2018}]{AGH18}
\begin{barticle}
\bauthor{\bsnm{Alff}, \binits{J.O.}},
\bauthor{\bsnm{Gräßle}, \binits{C.}},
\bauthor{\bsnm{Hinze}, \binits{M.}}:
\batitle{Adaptive trust-region {POD} for optimal control of the {C}ahn-{H}illiard equation}.
\bjtitle{PAMM}
\bvolume{18}(\bissue{1}),
\bfpage{201800453}
(\byear{2018})
\doiurl{10.1002/pamm.201800453}
\end{barticle}
\endbibitem

\bibitem[\protect\citeauthoryear{Rogg et~al.}{2017}]{RoggTV17}
\begin{botherref}
\oauthor{\bsnm{Rogg}, \binits{S.}},
\oauthor{\bsnm{Trenz}, \binits{S.}},
\oauthor{\bsnm{Volkwein}, \binits{S.}}:
Trust-region {POD} using a-posteriori error estimation for semilinear parabolic optimal control problems.
Konstanzer Schriften in Mathematik No. 359
(359)
(2017)
\end{botherref}
\endbibitem

\bibitem[\protect\citeauthoryear{Qian et~al.}{2017}]{QGVW17}
\begin{barticle}
\bauthor{\bsnm{Qian}, \binits{E.}},
\bauthor{\bsnm{Grepl}, \binits{M.}},
\bauthor{\bsnm{Veroy}, \binits{K.}},
\bauthor{\bsnm{Willcox}, \binits{K.}}:
\batitle{A certified trust region reduced basis approach to {PDE}-constrained optimization}.
\bjtitle{SIAM J. Sci. Comput.}
\bvolume{39}(\bissue{5}),
\bfpage{434}--\blpage{460}
(\byear{2017})
\doiurl{10.1137/16M108198}
\end{barticle}
\endbibitem

\bibitem[\protect\citeauthoryear{Ullmann et~al.}{2025}]{UEHH25}
\begin{barticle}
\bauthor{\bsnm{Ullmann}, \binits{S.}},
\bauthor{\bsnm{Ehring}, \binits{T.}},
\bauthor{\bsnm{Herkert}, \binits{R.}},
\bauthor{\bsnm{Haasdonk}, \binits{B.}}:
\batitle{A trust-region framework for optimization using {H}ermite kernel surrogate models}.
\bjtitle{arXiv}
(\byear{2025})
\doiurl{10.48550/arXiv.2507.01729}
\end{barticle}
\endbibitem

\bibitem[\protect\citeauthoryear{Kartmann et~al.}{2025}]{source_code}
\begin{botherref}
\oauthor{\bsnm{Kartmann}, \binits{M.}},
\oauthor{\bsnm{Klein}, \binits{B.}},
\oauthor{\bsnm{Ohlberger}, \binits{M.}},
\oauthor{\bsnm{Schuster}, \binits{T.}},
\oauthor{\bsnm{Volkwein}, \binits{S.}}:
Software for Adaptive Reduced Basis Trust Region Methods for Parabolic Inverse Problems
(2025).
\doiurl{10.5281/zenodo.15834305}
\end{botherref}
\endbibitem

\bibitem[\protect\citeauthoryear{Milk et~al.}{2016}]{doi:10.1137/15M1026614}
\begin{barticle}
\bauthor{\bsnm{Milk}, \binits{R.}},
\bauthor{\bsnm{Rave}, \binits{S.}},
\bauthor{\bsnm{Schindler}, \binits{F.}}:
\batitle{py{MOR} -- {G}eneric algorithms and interfaces for model order reduction}.
\bjtitle{SIAM Journal on Scientific Computing}
\bvolume{38},
\bfpage{194}--\blpage{216}
(\byear{2016})
\doiurl{10.1137/15M1026614}
\end{barticle}
\endbibitem

\bibitem[\protect\citeauthoryear{Nocedal and Wright}{2006}]{nocedal2006numerical}
\begin{bbook}
\bauthor{\bsnm{Nocedal}, \binits{J.}},
\bauthor{\bsnm{Wright}, \binits{S.J.}}:
\bbtitle{Numerical Optimization},
\bedition{2nd} edn.
\bsertitle{Springer Series in Operations Research and Financial Engineering}.
\bpublisher{Springer},
\blocation{New York}
(\byear{2006}).
\doiurl{10.1007/978-0-387-40065-5}
\end{bbook}
\endbibitem

\bibitem[\protect\citeauthoryear{Ern and Guermond}{2004}]{ern2004theory}
\begin{bbook}
\bauthor{\bsnm{Ern}, \binits{A.}},
\bauthor{\bsnm{Guermond}, \binits{J.-L.}}:
\bbtitle{Theory and Practice of Finite Elements}
vol. \bseriesno{159}.
\bpublisher{Springer},
\blocation{New York, NY}
(\byear{2004}).
\doiurl{10.1007/978-1-4757-4355-5}
\end{bbook}
\endbibitem

\bibitem[\protect\citeauthoryear{Azmi and Bernreuther}{2023}]{azmi2023nonmonotone}
\begin{barticle}
\bauthor{\bsnm{Azmi}, \binits{B.}},
\bauthor{\bsnm{Bernreuther}, \binits{M.}}:
\batitle{On the nonmonotone linesearch for a class of infinite-dimensional nonsmooth problems}.
\bjtitle{arXiv}
(\byear{2023})
\doiurl{10.48550/arXiv.2303.01878}
\end{barticle}
\endbibitem

\bibitem[\protect\citeauthoryear{Banholzer et~al.}{2022}]{Banholzer2022Trust}
\begin{barticle}
\bauthor{\bsnm{Banholzer}, \binits{S.}},
\bauthor{\bsnm{Mechelli}, \binits{L.}},
\bauthor{\bsnm{Volkwein}, \binits{S.}}:
\batitle{A trust region reduced basis {P}ascoletti-{S}erafini algorithm for multi-objective {PDE}-constrained parameter optimization}.
\bjtitle{Mathematical and Computational Applications}
\bvolume{27}(\bissue{3}),
\bfpage{39}
(\byear{2022})
\doiurl{10.3390/mca27030039}
\end{barticle}
\endbibitem

\bibitem[\protect\citeauthoryear{Klein and Ohlberger}{2025}]{klein2025multifidelitylearningreducedorder}
\begin{barticle}
\bauthor{\bsnm{Klein}, \binits{B.}},
\bauthor{\bsnm{Ohlberger}, \binits{M.}}:
\batitle{Multi-fidelity learning of reduced order models for parabolic {PDE} constrained optimization}.
\bjtitle{arXiv}
(\byear{2025})
\doiurl{10.48550/arXiv.2503.21252}
\end{barticle}
\endbibitem

\bibitem[\protect\citeauthoryear{Keil}{2022}]{keil2022adaptive}
\begin{barticle}
\bauthor{\bsnm{Keil}, \binits{T.}}:
\batitle{Adaptive reduced basis methods for multiscale problems and large-scale {PDE}-constrained optimization}.
\bjtitle{arXiv}
(\byear{2022})
\doiurl{10.48550/arXiv.2211.09607}
\end{barticle}
\endbibitem

\bibitem[\protect\citeauthoryear{Banholzer et~al.}{2022}]{Banholzer2020AnAP}
\begin{barticle}
\bauthor{\bsnm{Banholzer}, \binits{S.}},
\bauthor{\bsnm{Keil}, \binits{T.}},
\bauthor{\bsnm{Mechelli}, \binits{L.}},
\bauthor{\bsnm{Ohlberger}, \binits{M.}},
\bauthor{\bsnm{Schindler}, \binits{F.}},
\bauthor{\bsnm{Volkwein}, \binits{S.}}:
\batitle{An adaptive projected {N}ewton non-conforming dual approach for trust-region reduced basis approximation of {PDE}-constrained parameter optimization}.
\bjtitle{Pure and Applied Functional Analysis}
\bvolume{7},
\bfpage{1561}--\blpage{1596}
(\byear{2022})
\end{barticle}
\endbibitem

\bibitem[\protect\citeauthoryear{Pinnau}{2008}]{Pin08}
\begin{bbook}
\bauthor{\bsnm{Pinnau}, \binits{R.}}:
In: \beditor{\bsnm{Schilders}, \binits{W.H.A.}},
\beditor{\bsnm{Vorst}, \binits{H.A.}},
\beditor{\bsnm{Rommes}, \binits{J.}} (eds.)
\bbtitle{Model Reduction via Proper Orthogonal Decomposition},
pp. \bfpage{95}--\blpage{109}.
\bpublisher{Springer},
\blocation{Berlin, Heidelberg}
(\byear{2008}).
\doiurl{10.1007/978-3-540-78841-6_5}
\end{bbook}
\endbibitem

\bibitem[\protect\citeauthoryear{Buhr et~al.}{2014}]{buhr2014numerically}
\begin{barticle}
\bauthor{\bsnm{Buhr}, \binits{A.}},
\bauthor{\bsnm{Engwer}, \binits{C.}},
\bauthor{\bsnm{Ohlberger}, \binits{M.}},
\bauthor{\bsnm{Rave}, \binits{S.}}:
\batitle{A numerically stable a posteriori error estimator for reduced basis approximations of elliptic equations}.
\bjtitle{arXiv}
(\byear{2014})
\doiurl{10.48550/arXiv.1407.8005}
\end{barticle}
\endbibitem

\end{thebibliography}

\clearpage
\appendix
\section{Additional figures for Run 1}
\label{sec:appendix}

\begin{figure}[ht]
    \centering
    \includegraphics[width=\textwidth]{./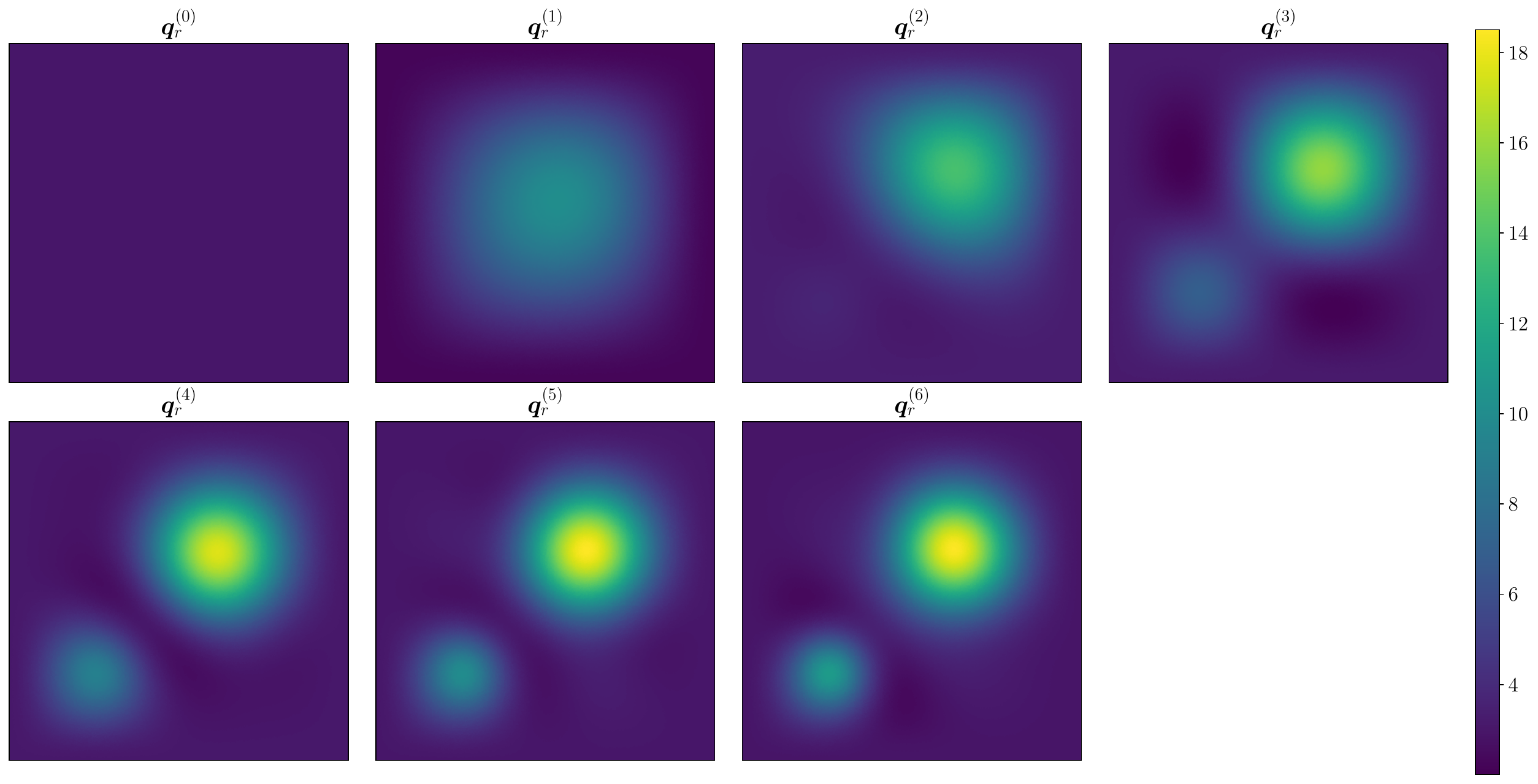} 
	\caption{Run 1: Approximative solutions $\bm{q}^\iteridx_r$ to the subproblems, obtained by TR-IRGNM for ${\epsPOD = 10^{-12}}$.}
    \label{fig:run1_evolution}
\end{figure}

\begin{figure}[ht] 
    \centering
    \includegraphics[width=\textwidth]{./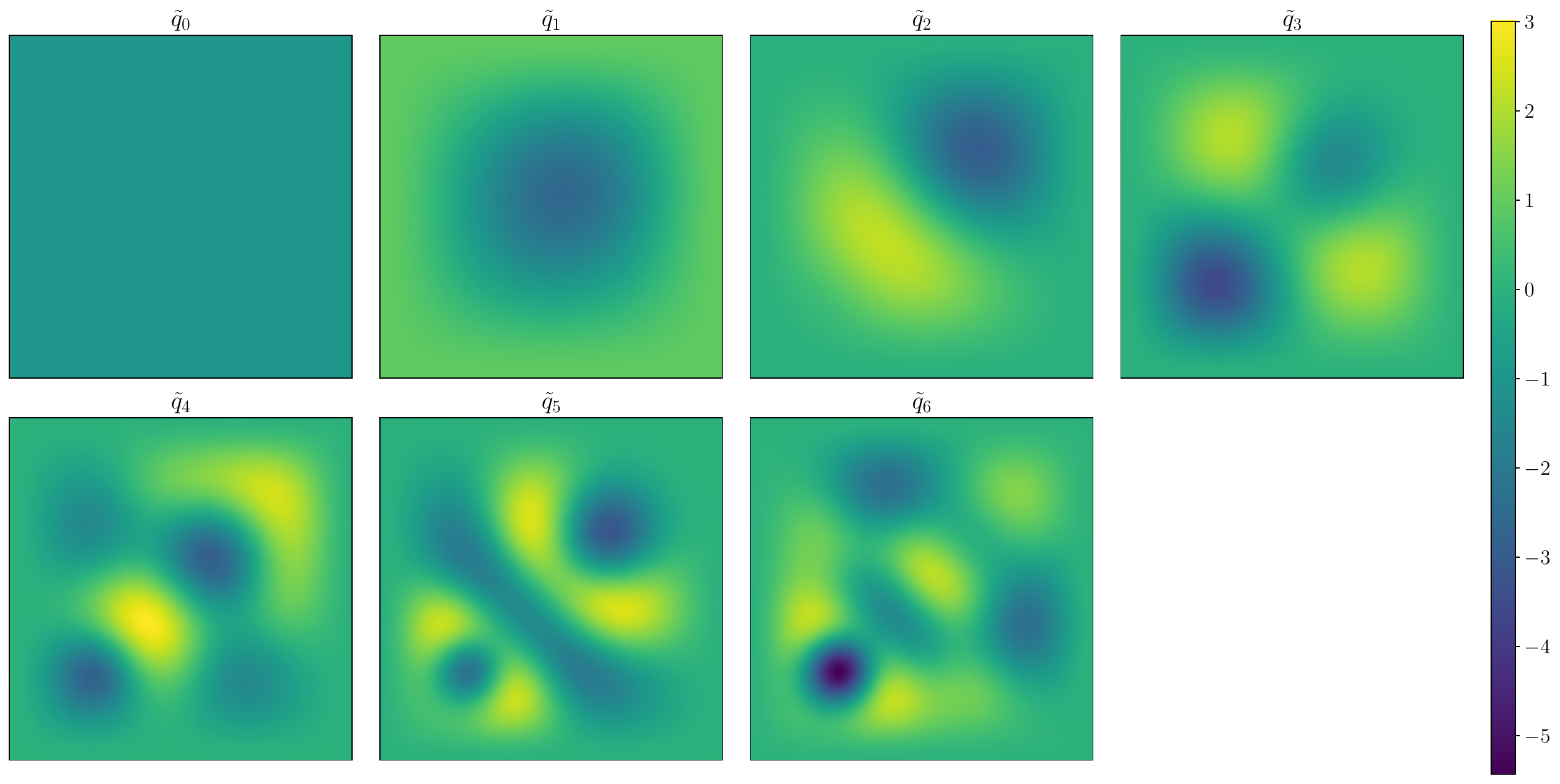} 
	\caption{Run 1: Basis vectors of $Q_r^{(6)}$ iteratively constructed by TR-IRGNM for $\epsPOD = 10^{-12}$.}
    \label{fig:run1_q_basis}
\end{figure}

\end{document}